\documentclass[review,3p,10pt]{elsarticle}
\biboptions{sort&compress}

\usepackage{amssymb}
\usepackage{amsmath}
\usepackage{cases}
\usepackage{epstopdf}
\usepackage{fixltx2e}
\usepackage{mathtools}
\usepackage{array}
\usepackage{bm}
\usepackage{multirow}
\usepackage{makecell}
\usepackage{tabularx}
\usepackage{longtable}
\usepackage{float}
\usepackage{caption}
\usepackage{color}
\usepackage{algorithm}
\usepackage{algorithmic}
\usepackage{amsthm}
\usepackage{mathrsfs}
\usepackage{amssymb}

\newtheorem{proposition}{Proposition}
\newtheorem{remark}{Remark}
\usepackage{lineno}
\usepackage{booktabs}
\usepackage{threeparttable} 
\usepackage{caption}
\usepackage{graphicx}
\usepackage{booktabs}
\usepackage{multirow}
\usepackage{threeparttable}
\usepackage{adjustbox}
\usepackage{float}
\usepackage{subcaption}
 \usepackage{pdflscape}
 \usepackage{adjustbox}
 \usepackage{multicol}
\usepackage{enumitem}
\usepackage[table,xcdraw]{xcolor}

\usepackage{url}

\newcommand{\tabincell}[2]{\renewcommand\arraystretch{0.8}\begin{tabular}{@{}#1@{}}#2\end{tabular}}

\journal{arXiv.org}

\begin{document}
\begin{frontmatter}


\title{Frequency Security-Aware Production Scheduling of Utility-Scale Off-Grid Renewable P2H Systems Coordinating Heterogeneous Electrolyzers}

\author[label1,label2]{Jie~Zhu}
\author[label1]{Yiwei~Qiu\corref{cor1}}
\ead{ywqiu@scu.edu.cn}
\author[label1]{Yangjun~Zeng}
\author[label2]{Shahab~Dehghan}
\author[label2]{Sheng~Wang}
\author[label1]{Shi~Chen}
\author[label1]{Buxiang~Zhou}

\address[label1]{College of Electrical Engineering, Sichuan University, Chengdu, 610065, China}
\address[label2]{School of Engineering, Newcastle University, Newcastle upon Tyne, NE1 7RU, UK}
\cortext[cor1]{Corresponding author}

\begin{abstract}
  Renewable power-to-hydrogen (ReP2H) enables large-scale renewable energy utilization and supports the decarbonization of hard-to-abate sectors, such as chemicals and maritime transport, via hydrogen-based renewable ammonia and methanol fuels. As a result, utility-scale ReP2H projects are expanding worldwide. However, off-grid ReP2H systems exhibit low inertia due to their converter-dominated nature, making frequency security a critical concern. Although recent studies show that electrolyzers can contribute to frequency regulation (FR), their support capability depends on operating states and loading levels, creating a trade-off between hydrogen output and frequency security. To address this challenge, this work develops a unified co-optimization framework for frequency security-aware production scheduling of utility-scale off-grid ReP2H systems coordinating heterogeneous electrolyzers.  A system-level frequency response model is established to capture multi-stage FR from alkaline water electrolyzers (AWEs), proton exchange membrane electrolyzers (PEMELs), and other resources, including ammonia-fueled generators retrofitted in co-located chemical plants, battery energy storage, and wind turbines (WTs). Stage-wise transient frequency security constraints are derived, reformulated into tractable forms, and embedded into production scheduling, enabling coordinated on/off switching and load allocation across electrolyzers to maximize hydrogen output under uncertain renewable power input while enforcing frequency security constraints. Case studies based on real-world systems demonstrate that the proposed approach allows HPs to replace 55.52\% and 96.85\% of FR reserves from WTs and AFGs, respectively, while maintaining comparable hydrogen output. Year-long simulations show an average 28.96\% increase in annual net profit resulting from reduced reliance on conventional reserves.
\end{abstract}

\begin{keyword}
  Frequency security, frequency reserve, renewable power-to-hydrogen (ReP2H), scheduling, electrolyzer
\end{keyword}

\end{frontmatter}

\break

\section*{\footnotesize Nomenclature}
\addcontentsline{toc}{section}{Nomenclature}

\begin{multicols}{2}
	
	\footnotesize
	\setlength{\columnsep}{18pt} 
	
	\noindent\textbf{Abbreviations}
	\begin{description}[style=standard,leftmargin=!,labelwidth=1.8cm,labelsep=0.5em,itemsep=0pt,topsep=0pt, font=\normalfont]
		\item[AFG] Ammonia-fueled generator
		\item[AWE] Alkaline water electrolyzers
		\item[BES] Battery energy storage
        \item[COI] Center-of-inertia
		\item[DRCC] Distributionally robust chance constraints
		\item[EDL] Electrical double-layer
		\item[EL] Electrolyzer
		\item[FC] Frequency constrained
		\item[FSPS]  Frequency-supporting production scheduling
		\item[GFM] Grid-forming
		\item[HP]  Hydrogen plant
		\item[PEMEL] Proton exchange membrane electrolyzers
		\item[PV]  Photovoltaic
		\item[PFR] Primary frequency regulation
		\item[ReP2H] Renewable power-to-hydrogen
		\item[RoCoF] Rate of change of frequency
		\item[VI] Virtual inertia
		\item[WT]  Wind turbine
	\end{description}
	
	\vspace{5pt}
	\noindent\textbf{Indices}
	\begin{description}[style=standard,leftmargin=!,labelwidth=1.8cm,labelsep=0.5em,itemsep=0pt,topsep=0pt]
		\addcontentsline{toc}{section}{Nomenclature}
		\item[$t$]
		Index of scheduling time interval
		
		\item[$b/e/g/w$]
		Index for BES/EL/AFG/WT
		
		\item[$i/j/ij $]
		Index for bus and branch between bus $i$ and $j$
		
		\item[$\mathcal{F}(i) $]
		Set of parents of bus $i$ in the DistFlow model
		
		\item[$\varsigma(i)$]
		Set of children of bus $i$ in the DistFlow model
		
	\end{description}
	
	\vspace{5pt}
	\noindent\textbf{Variables}
	\begin{description}[style=standard,leftmargin=!,labelwidth=1.8cm,labelsep=0.5em,itemsep=0pt,topsep=0pt]
		\addcontentsline{toc}{section}{Nomenclature}
		\item[$\Delta f$]
		Center of inertia frequency deviation
		
		\item[$\Delta f_{b/e/g} $]
		Frequency deviation at the BES/HP/AFG bus
		
		\item[$ h_{e}^\mathrm{cool}$]
		Cooling heat of EL
		
		\item[$I_e$]
		Electrolysis current of the EL
		
		
		%
		
		\item[$P_e^\mathrm{auxi/sb}$]
		Auxiliary device/standby power of EL
		
		\item[$P_c^\mathrm{comp}$]
		Hydrogen compressor power
		
		\item[$P_e^\mathrm{cool}$]
		Cooler power of EL
		
		\item[$P_e^\mathrm{pump}$]
		Pump power of EL
		
		
		\item[$P_{b/g/w/s}$]
		Power of BES/AFG/WT/PV plant
		
		\item[$P_{e/d}$]
		Power of EL/downsrtream chemical industry load
		
		\item[$P_{b,t}^{\mathrm{C/D}}$]
		Charging/discharging power of BES
		
		\item[$\tilde{P}_w/\hat{P}_{w/s} $]
		Available/forecast power of WT/PV plant
		
		\item[$\Delta P_{g/w/b/e}^{\mathrm{PFR}} $]
		PFR power
		
		\item[$\Delta P_{{\mathrm{dis}}} $]
		Disturbance power
		
		\item[$q_{e}^{\mathrm{H_2}}$]
		Hydrogen production flow rate
		
		\item[$ R_{g/w/b/e}^{\mathrm{PFR}} $]
		PFR reserve
		
		\item[$ R_{g/w/b/e}^{\mathrm{+/-}} $]
		Up/down regulation reserve
		
		\item[$E_{b}$]
		State of charge
		
		\item[$ T_e$]
		Operation temperature of EL
		
		\item[$T_{e}^\mathrm{cool}$]
		EL cooling system temperature
		
		\item[$x_{b}^{\mathrm{C}}$]
		Charging state of BES
		
		\item[$x_{e}^{\mathrm{st/sb/sp}}$]
		Start-up/standby/shutdown state of EL
		
		\item[$x_{g}^{\mathrm{st}}$]
		Start-up states of AFG
		
		\item[$z_{e}^{\mathrm{up,c/up,h}}$]
		Hot/cold start-up operation of EL
		
		\item[$z_{g}^{\mathrm{up/down}}$]
		Start-up/shutdown operation of AFG
		
		\item[$z_{e}^{\mathrm{down}}$]
		Shutdown operation of EL
		
		\item[$\xi_{w/s} $]
		Forecast error of WT/PV plant
		
	\end{description}
	
	\vspace{5pt}
	\noindent\textbf{Parameters}
	\begin{description}[style=standard,leftmargin=!,labelwidth=1.8cm,labelsep=0.5em,itemsep=0pt,topsep=0pt]
		\addcontentsline{toc}{section}{Nomenclature}
		
		\item[$a_{e}^\mathrm{cool}$]
		Swatch factor of temperature to heat
		
		\item[$c_{e}^{\mathrm{H_2}}$]
		Green hydrogen price
		
		\item[$c_{g}^{\mathrm{NH_3}}$]
		Generation cost of AFG
		
		\item[$c_{e/g}^{\mathrm{up/down}}$]
		Start-up/shutdown cost of EL/AFG
		%
		
%
%
		
		\item[$D_d$]
		Damping rate of frequency-dependent load
		
		\item[$D_{g/b}$]
		Damping/virtual damping from AFG/BES
		
		\item[$ \Delta f_{\mathrm{db1/db2}}$]
		Frequency deadband 1/2
		
		\item[$ \Delta f_{\mathrm{nadir}}^{\mathrm{lim}}$]
		Limit of frequency deviation nadir
		
		\item[$ \Delta f_{\mathrm{qss}}^{\mathrm{lim}}$]
		Limit of quasi-steady-state frequency deviation
		
		\item[$ \Delta f_{\mathrm{RoCoF}}^{\mathrm{lim}}$]
		Limit of rate of change of frequency
		
		\item[$H_{g} $]
		Inertia constant of AFG
		
		\item[$H_{b/e} $]
		Virtual inertia from BES/EL
		
		\item[$\overline{I}_e/ \underline{I}_e$]
		Current limits of EL
		
		\item[$K_w^{\mathrm{deload}}$]
		Deloading coefficient of WT
		
		\item[$n_{\mathrm{c}}$]
		Number of cells in stack
		
		\item[$\overline{P}_{e/g}/\underline{P}_{e/g}$]
		Power limits of EL/AFG
		
		\item[$\overline{P}_{ij}/\underline{P}_{ij}$]
		Power limits of branch $ij$
		
		\item[$P_{b}^{\mathrm{C/D,lim}}$]
		Charging/discharging power limit of BES
		
		\item[$q_{c,t}^\mathrm{lim}$]
		Flow limit of compressor
		
		\item[$r_g^{\mathrm{u/d}}$]
		Up/down ramping limits of AFG
		
		\item[$ R_e^\mathrm{ohm}$]
		Ohmic resistance of EL
		
		\item[$R_{g}^\mathrm{+/-,lim}$]
		Up/down regulation reserve limits of AFG
		
		\item[$R_{g}^\mathrm{PFR,lim}$]
		PFR reserve limit of AFG
		
		\item[$ t_{b/e/g/w} $]
		PFR delivery time of BES/EL/AFG/WT
		
		\item[$ t_{\mathrm{DB1/DB2}}$]
		Time to reach frequency deadband 1/2
		
		\item[$T_{g}^{\mathrm{on/off}}$]
		Limit of start-up/shutdown duration time of AFG
		
		\item[$ \overline{T}_e/ \underline{T}_e$]
		Operation temperature limits of EL
		
		\item[$ V^\mathrm{re/tn}$]
		Reverse/thermal-neutral voltage of EL cell
		%
		
		\item[$LHV_{\mathrm{NH_3}}$]
		Lower heating value of ammonia
		
		\item[$LHV_{\mathrm {H_2}}$]
		Lower heating value of hydrogen
		
		\item[$\rho_{w/r}$]
		Violation probability of chance constraint
		\item[$\eta_{e}^{\mathrm{pump/cool}}$]
		Efficiency of pump / cooling system
		
		\item[$\eta_g^\mathrm{comb/steam}$]
		Efficient of combustion/steam turbine

		\item[$\eta_b^{\mathrm{BES}}$]
		Efficiency of charging/discharging of BES
	\end{description}
	
\end{multicols}

\break
\section{Introduction}
\label{sec:intro}

\subsection{Background and motivation}
\label{sec:Background and motivation}

Driven by decarbonization targets, renewable power-to-hydrogen (ReP2H) systems that couple local wind and solar generation with hydrogen production have become an important pathway for low-carbon transition in the hard-to-abate chemical and maritime sectors~\cite{li2025redesigning}. The \emph{International Renewable Energy Agency} projects that global electrolyzer capacity will reach 500~GW by 2050, with 66\% of hydrogen produced from renewable-powered electrolysis~\cite{bianco_green_2020}.
In regions rich in renewable resources but lacking grid access (e.g., deserts, and plateaus), utility-scale off-grid ReP2H systems are emerging to produce hydrogen and downstream green fuels (e.g., ammonia, and methanol) \cite{zhu2026exploring,zeng2025planning,Yu2024planning}. Pilot 
projects have been commissioned in Songyuan and Da'an in China~\cite{Songyuan2023,zeng2024investment}, as well as in Australia~\cite{arena_yuri}, Europe~\cite{ammoniaenergy2024}, and many other regions.

The converter-dominated nature and lack of external grid support, however, makes off-grid ReP2H systems highly vulnerable to transient frequency instability~\cite{lu2025stability}, which may not only disrupt power transmission across transformers and converters, but also endanger co-located chemical processes via rotating compressors and pumps. While grid-forming (GFM) control offers a promising solution \cite{zhu2026exploring}, the industrial deployment of GFM renewable generations and battery energy storage (BES) remains limited~\cite{osti_2478292}. Moreover, relying solely on GFM-BES for frequency support
may accelerate degradation and reduce lifecycle performances~\cite{zhu2026exploring}. As a practical alternative, new system designs retrofit auxiliary generators of co-located chemical plants into ammonia-fueled generators (AFGs) ~\cite{Longyan2025,kong2024real}. They can provide rotational inertia and primary frequency regulation (PFR), while also serving as back-up power sources for critical chemical processes in the hydrogen downstream.

To maintain frequency security under disturbances such as renewable fluctuations or electrolyzer outages, frequency-constrained (FC) scheduling methods can be applied to dispatch AFGs in off-grid ReP2H systems~\cite{chu2021frequency,chu2024scheduling,cui2024control}. These methods convert post-contingency frequency metrics, such as frequency nadir, rate of change of frequency (RoCoF), and quasi-steady-state frequency, into constraints on inertia and reserve capacity. For example, Chu et al.~\cite{chu2021frequency} derived nadir and RoCoF of center-of-inertia (COI) frequency from the swing equation and embedded them into microgrid scheduling as security constraints. Later studies extended this framework by incorporating wind turbines (WTs) and BESs for virtual inertia (VI) and PFR provision~\cite{chu2024scheduling}, and by exploring GFM-enabled BES for enhanced performance~\cite{cui2024control}.
These studies confirm that WTs and BESs can mitigate frequency deviations and reduce the regulation burden of generators. However, the frequency support potential of hydrogen plants (HPs) remains unexplored.

Utility-scale HPs consist of multiple electrolyzers operating under varying load, which can inherently provide upward and downward reserves~\cite{qiu2023extend}. Enabling HPs to contribute to frequency regulation could further reduce reliance on conventional reserves and lower operating costs. Nevertheless, HPs’ participation in FR alters system dynamics, and their frequency support depends on operating states and loading levels, creating a trade-off between hydrogen production and frequency security.

Aiming to enforce transient frequency security constraints while maximizing hydrogen production, this paper develops a frequency dynamic model for off-grid ReP2H systems, derives stage-wise transient frequency security constraints, and proposes a frequency-supporting production scheduling (FSPS) approach to ensure reliable frequency support from HPs in off-grid ReP2H systems. Related literature are reviewed in Section \ref{sec:review}, and Section \ref{sec:contribution} outlines the main contributions of this work.

\subsection{Literature review}
\label{sec:review}

Experimental studies have shown that alkaline water electrolyzers (AWEs) and proton exchange membrane electrolyzers (PEMELs) can deliver second-scale (0.1--4~s) load ramping~\cite{entsoe2022p2h2,qiu2023dynamic,cheng2025power}. Their potential for grid frequency regulation has therefore attracted increasing attention. A summary of recent studies is provided in Table~\ref{tab:literature}.

For example, Hossain et al.~\cite{hossain2023power} designed a frequency controller and demonstrated fast PFR from PEMELs. Dozein et al.~\cite{dozein2021fast,dozein2022virtual} proposed control schemes that enable PEMELs to provide both PFR and VI response. Huang~et~al.~\cite{huang2022analytical} developed a controller allowing AWEs to participate in PFR. Guan~et~al.~\cite{guan2024frequency} further investigated parameter tuning for AWEs' frequency control.
Cheng et al.~\cite{cheng2023coodinated} coordinated electrolyzers and WTs to deliver frequency support to the utility grid and participate in the frequency regulation market. Wu et al.~\cite{wu2022incentivizing} and Li et al.~\cite{li2025pricing,li2024cvar} incorporated electrolyzer-based inertia and PFR into FC scheduling for bulk power systems.

\begin{table}[t]
\centering
\renewcommand{\arraystretch}{1.05}
\caption{Summary and comparison of existing studies on frequency support from hydrogen plants.}
\label{tab:literature}
\vspace{-6pt}
\centering
\begin{adjustbox}{width=\textwidth}
\begin{tabular}{ccccccccccc}
\hline\hline
\multirow{3}{*}{Literature\vspace{-4pt}}
& \multirow{3}{*}{\tabincell{c}{Frequency \\ support \\ from HP\vspace{-4pt}}}
& \multirow{3}{*}{\tabincell{c}{HP model}\vspace{-4pt}}
& \multicolumn{4}{c}{\tabincell{c}{Frequency support types}}
& \multicolumn{2}{c}{\tabincell{c}{Frequency model}}
& \multirow{3}{*}{\tabincell{c}{Coupled \\ HP \\ scheduling \vspace{-4pt}}}
& \multirow{3}{*}{\tabincell{c}{Uncertainty}\vspace{-4pt}}
\\
\cline{4-7}
\cline{8-9}
& &
& Inertia & Damping & PFR & GFM
& \tabincell{c}{Staged FR$^{2}$} & \tabincell{c}{Staged\\ security\\  constraints$^{3}$}
& &
\\ \hline

Chu et al.~\cite{chu2021frequency,chu2024scheduling}
& $\times$ & --
& $\checkmark$ & $\checkmark$ & $\checkmark$ & $\times$
& $\times$ & $\times$
& $\times$ & $\checkmark$
\\

Cui et al.~\cite{cui2024control}
& $\times$ & --
& $\checkmark$ & $\checkmark$ & $\checkmark$ & $\checkmark$
& $\times$ & $\times$
& $\times$ & $\checkmark$
\\

Hossain et al.~\cite{hossain2023power}
& $\checkmark$ & Single-PEMEL
& $\times$ & $\times$ & $\checkmark$ & $\times$
& $\times$ & $\times$
& $\times$ & $\times$
\\

Dozein et al.~\cite{dozein2021fast}
& $\checkmark$ & Single-AWE
& $\times$ & $\times$ & $\checkmark$ & $\times$
& $\times$ & $\times$
& $\times$ & $\times$
\\

Dozein et al.~\cite{dozein2022virtual}
& $\checkmark$ & Single-PEMEL
& $\checkmark$ & $\checkmark$ & $\times$ & $\times$
& $\times$ & $\times$
& $\times$ & $\times$
\\

Huang et al.~\cite{huang2022analytical}
& $\checkmark$ & Multi-AWE
& $\times$ & $\times$ & $\checkmark$ & $\times$
& $\times$ & $\times$
& $\times$ & $\times$
\\

Guan et al.~\cite{guan2024frequency}
& $\checkmark$ & Multi-AWE
& $\times$ & $\times$ & $\checkmark$ & $\times$
& $\times$ & $\times$
& $\times$ & $\times$
\\

Cheng et al.~\cite{cheng2023coodinated}
& $\checkmark$ & Aggregated EL$^{1}$
& $\times$ & $\times$ & $\checkmark$ & $\times$
& $\times$ & $\times$
& $\times$ & $\times$
\\

Wu et al.~\cite{wu2022incentivizing}
& $\checkmark$ & Aggregated EL
& $\checkmark$ & $\times$ & $\checkmark$ & $\times$
& $\checkmark$ & $\times$
& $\times$ & $\times$
\\

Li et al.~\cite{li2025pricing}
& $\checkmark$ & Aggregated EL
& $\checkmark$ & $\times$ & $\checkmark$ & $\times$
& $\checkmark$ & $\times$
& $\times$ & $\checkmark$
\\

Li et al.~\cite{li2024cvar}
& $\checkmark$ & Aggregated EL
& $\checkmark$ & $\checkmark$ & $\checkmark$ & $\times$
& $\checkmark$ & $\times$
& $\times$ & $\checkmark$
\\
\hline
\textbf{This work}
& $\checkmark$
& \makecell[l]{\tabincell{c}{\textbf{Multiple AWEs}\\ \textbf{and PEMELs}}}
& \checkmark & \checkmark & \checkmark & \checkmark
& \checkmark & \checkmark
& \checkmark & \checkmark
\\
\hline\hline

\multicolumn{11}{l}{\makecell[l]{$^{1}$Electrolyzer; $^{2}$Explicit modeling of staged frequency regulation, i.e., resources response under different dead-bands and time delays.}}\\
\multicolumn{11}{l}{\makecell[l]{$^{3}$Frequency security constraints capture the impact of the staged frequency regulation strategy (e.g., nadir occurrence in different stages). }}

\end{tabular}
\end{adjustbox}
\end{table}

Despite these advances, existing approaches are not directly applicable to utility-scale off-grid ReP2H systems, with the limitations given below:

First, off-grid ReP2H systems may integrate heterogeneous resources, including AWEs, PEMELs, BESs, WTs, and AFGs, each with distinct dynamic characteristics. Their coordinated response under disturbances leads to complex frequency behavior. Many existing models assume instantaneous PFR delivery from electrolyzers~\cite{li2024cvar} or neglect load damping effects~\cite{wu2022incentivizing}, which limits their accuracy and applicability for off-grid ReP2H systems that are very sensitive to disturbances. A refined yet tractable frequency dynamic model that captures staged regulation from diverse resources is still lacking.

Second, most FC scheduling studies model the HP as a single aggregated electrolyzer with fixed VI and PFR capability over the scheduling horizon \cite{cheng2023coodinated,wu2022incentivizing,li2025pricing,li2024cvar}. This simplification neglects both the heterogeneity of frequency regulation capabilities among different types of electrolyzers in utility-scale multi-electrolyzer HPs and the influence of production scheduling, i.e., on/off switching and load allocation, on the frequency support performance of the HP.

Specifically, HPs may contain both AWEs and PEMELs~\cite{hu2025triple}. Due to slower electrochemical kinetics and stronger electrical double-layer (EDL) effects, AWEs exhibit slower power response than PEMELs~\cite{agredano2025decentralized,dang2022transient}. PEMELs, in contrast, provide faster and more flexible regulation, but are smaller in capacity \cite{entsoe2022p2h2,cheng2025power}. Representing the entire plant as a single aggregated unit, therefore, misestimates its actual capability.

Moreover, electrolyzers in utility-scale HPs are scheduled using rotation rules~\cite{li2023exploration,zheng2023off}  or optimization-based strategies~\cite{li2024two,zheng2022optimal} to maximize renewable utilization and hydrogen output. The available VI and PFR reserves vary with the on/off states and loading levels of individual electrolyzers. Because production efficiency depends on operating conditions such as load level and temperature~\cite{zheng2022optimal}, ensuring frequency security requires balancing hydrogen output and reserve provision. To date, limited studies have jointly modeled differentiated frequency support from AWEs and PEMELs within plant-level production scheduling, to the best of our knowledge.

\subsection{Contributions}
\label{sec:contribution}

To address these gaps, this paper proposes an FC production scheduling framework for utility-scale off-grid ReP2H systems. The proposed method jointly optimizes off-grid network operation and multi-electrolyzer HP production scheduling, enabling part-loaded AWEs and PEMELs to provide reliable frequency support based on their assigned reserves and fast response. This support partially replaces conventional reserves from AFGs and renewable units while maintaining profitability from hydrogen output and transient frequency security. The main contributions include:
\begin{enumerate}
	\item We develop a detailed frequency response model of HPs that considers distinct frequency support capabilities of AWEs and PEMELs, and establish a multi-stage frequency dynamic model for the off-grid ReP2H system. Based on this model, we derive stage-wise transient frequency security constraints and show that they can be incorporated into scheduling with acceptable accuracy.

	\item We propose a frequency-supporting production scheduling (FSPS) approach that jointly considers VI and PFR provision from AWEs and PEMELs, as well as their nonlinear efficiency and thermal characteristics. The proposed FSPS coordinates hydrogen production and reserve allocation to ensure adequate frequency support while maximizing benefits from hydrogen production.
	
	\item We embed the FSPS into the FC scheduling framework of the off-grid ReP2H system and formulate a joint optimization model. The model captures the interaction between plant-level reserve provision and system-level operation cost, maximizes net profit, and enforces transient frequency security. Year-long simulation shows that the proposed method improves annual net profit by 28.96\% on average, attributed to the reduced FR costs while maintaining comparable hydrogen output.
\end{enumerate}

The remainder of this paper is organized as follows. Section~\ref{sec:FreqResponse} presents the frequency response models. Section~\ref{sec:FreqDynamic} derives the system frequency dynamics and security constraints. Section~\ref{sec:SchedulingModel} introduces the FSPS-embedded FC scheduling model. Section~\ref{sec:CaseStudy} reports case studies, and Section~\ref{sec:Conclusion} concludes the paper.

\begin{figure}[t]
	\centering
	\includegraphics[width=5.5in]{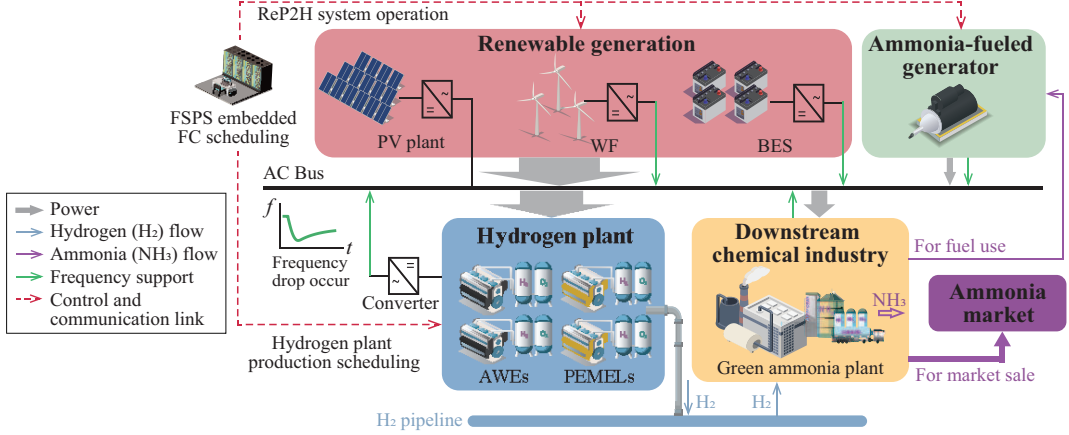}\vspace{-4.5pt}
	\caption{Schematic diagram of a typical off-grid ReP2H system.}
	\label{fig:system}\vspace{0pt}
\end{figure}

\section{Modeling of frequency response from heterogeneous regulation resources in ReP2H systems}
\label{sec:FreqResponse}

A typical off-grid ReP2H system is illustrated in Fig.~\ref{fig:system}. In this system, HPs are the main electrical loads and convert most renewable electricity into hydrogen for ammonia synthesis. The majority of the produced ammonia is exported to the market to supply the chemical and shipping industries, while the remainder serves as fuel for AFGs. Since both generation and electrolytic loads are connected through power-electronics converters, the system has low physical inertia and is therefore sensitive to disturbances in terms of frequency security.

Although resources, e.g., renewable inverters, battery energy storage, electrolyzers and AFGs, can provide frequency regulation, their response differ substantially. AFGs rely on rotor inertia and governor dynamics, while renewable units, batteries, and electrolyzers respond through converter control. This section characterizes the dynamic frequency support capability of the heterogeneous resources and formulates a coordinated regulation strategy.

For clarity, let $\mathcal{A}, \mathcal{P}, \mathcal{E},\mathcal{B}, \mathcal{W}, \mathcal{G}$ denote the sets of AWEs, PEMELs, all electrolyzers, BESs, WTs, and AFGs, respectively, with $\mathcal{E} = \mathcal{A} \cup \mathcal{P}$.

\subsection{Frequency support from HPs}

AWEs and PEMELs are the dominant industrial electrolyzers. Given an electrolysis current $I$, hydrogen production rate and stack power of both AWEs and PEMELs can be expressed as \cite{zeng2024scheduling,zheng2022optimal}:
\begin{gather}
	q_{e}^{\mathrm{H_2}}  = \frac{\eta_{\mathrm{F}}(I,T) n_{\mathrm{c}} I M^{\mathrm{H_2}}} { 2F},  \\
	P_{e}^{\mathrm{stack}}(I,T)  = I V^{\mathrm{stack}}(I,T), \label{eq:7}
\end{gather}
where $M^{\mathrm{H_2}}$ is hydrogen molar mass; $\eta_{\mathrm{F}}(\cdot) =  (I/A)^2/(f_1+(I/A)^2)\times f_2$ is Faraday efficiency, with $f_1 = 2.5T + 50$, $f_2 = 1 - 6.25 \times 10^{-6}T$; and  $F = 96485.3 \ \mathrm{C/mol}$ is Faraday constant. For detailed expressions of $\eta_{\mathrm{F}}$ and $V^{\mathrm{stack}}$, readers are referred to our prior work~\cite{qiu2023dynamic,zeng2024scheduling}.

Electrolyzers provide frequency support by adjusting load in response to the locally measured frequency signal, as shown in Fig.~\ref{fig:StepResponse}(a). Since thermal dynamics evolve on an hour scale, they are neglected in second-level frequency analysis. The equivalent circuit for AWEs and PEMELs in Fig.~\ref{fig:StepResponse}(b), which captures dominant short-term electrical dynamics and has been validated against experiments \cite{sha2025semi,sha2023low,cheng2025power}, captures the EDL effect through $C_e^{\mathrm{dl}}$, $R_e^{1}$, and $R_e^{2}$. Based on Fig. \ref{fig:StepResponse}(b), the transient stack power under current $I(\tau)$ in terms of transient time $\tau$ can be expressed as:
\begin{gather}
	P_e^{\mathrm{stack}}\big(I(\tau)\big)  = n_{\mathrm{c}}V_{e}^{\mathrm{stack}}\big(I(\tau)\big)I(\tau),  \label{eq:10} \\
	V_e^{\mathrm{stack}}(I(\tau))  =V_e^{\mathrm{re}}+V_e^{\mathrm{dl}}(\tau)+R_e^{\mathrm{ohm}}I(\tau), \\
	C_e^{\mathrm{dl}} \frac{\mathrm{d}V_e^{\mathrm{dl}}(\tau)}{\mathrm{d}\tau}=I(\tau)-V_e^{\mathrm{dl}}(\tau)/(R_e^\mathrm{1}+R_e^\mathrm{2}).  \label{eq:12}
\end{gather}

Industrial-scale electrolyzers commonly adopt current-following control in their rectifiers~\cite{cheng2025power}. Under a step change in the current reference signal at \(t = 0\), with \(I(0) = I\) and \(I(\infty) = I + \Delta I\), assuming ideal rectifier current tracking, the power response of the electrolysis stack can be derived from (\ref{eq:10})--(\ref{eq:12}) as:
\begin{gather}
	\Delta P_e^{\mathrm{stack}}(\tau)=  n_{\mathrm{c}} \Delta I[V_e^{\mathrm{re}}+\Delta I(R_e^\mathrm{1}+R_e^\mathrm{2})+\Delta IR_e^{\mathrm{ohm}}]  +n_{\mathrm{c}} \Delta I[V_e^{\mathrm{dl}}(0)-\Delta I(R_e^\mathrm{1}+R_e^\mathrm{2})]e^{(-\tau/\theta)},
\end{gather}
where $ \theta=(R_e^1+R_e^2)C_e^{\mathrm{dl}}$ is the time constant associated with the EDL effect. AWEs typically have larger $C_e^{\mathrm{dl}}$ due to the liquid electrolytes and lower current density, and thus slower response than PEMELs~\cite{cheng2025power}.

\begin{figure}[t]
	\centering
	\includegraphics[width=5.5in]{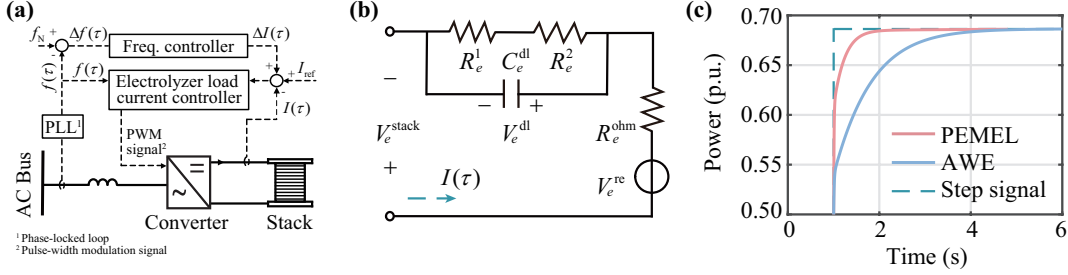}\vspace{-4.5pt}
	\caption{Frequency control and dynamic load response of electrolyzers. (a) Frequency control diagram. (b) Equivalent circuit of the electrolysis stack. (c) Step responses of the AWE and PEMEL.}
	\label{fig:StepResponse}\vspace{0pt}
\end{figure}

Fig.~\ref{fig:StepResponse}(c) shows the simulated load step responses of 5 MW-rated AWE and PEMEL with parameters in~\cite{cheng2025power,zhu2026exploring}. The rise times for AWE load power to reach 50\% and 95\% of the reference are about 1~s and 2.8~s, respectively, while those for PEMELs are only 0.04~s and 0.55~s. During frequency regulation, inertial response power must be delivered before the frequency deviation reaches the deadband time \(t_{\mathrm{DB}}\) to limit RoCoF. In large power grids, \(t_{\mathrm{DB}} \approx 0.5~\mathrm{s}\)~\cite{trovato2018unit}, whereas in low-inertia microgrids such as the off-grid ReP2H system, \(t_{\mathrm{DB}} < 0.5~\mathrm{s}\). Thus, only PEMELs can provide effective VI support, and the equivalent inertia can be modeled by matching the maximum RoCoF constraint, as:
\begin{align}
	H_e=\frac{P_e^{\mathrm{stack}}(t_\mathrm{db})-P_e^{\mathrm{stack}}(0)}{\Delta f_{\mathrm{RoCoF}}^{\mathrm{lim}}}/f_0=\frac{\Delta P_e^{\mathrm{VI}}}{\Delta f_{\mathrm{RoCoF}}^{\mathrm{lim}}f_0},
\end{align}
where $\Delta P_e^{\mathrm{VI}}$ denotes the inertial response power of PEMELs under the maximum current ramping. In contrast, AWEs primarily contribute to PFR due to slower dynamics. The overall frequency response of the electrolyzers in an HP follows:
\begin{align}
	\Delta P_{\mathrm{HP}}^{\mathrm{stack}}(\tau) =
	\sum_{e \in \mathcal{P}} \Delta P_{e}^{\mathrm{VI}}(\tau) +
	\sum_{e \in \mathcal{A}} \Delta P_{e}^{\mathrm{PFR}}(\tau).
\end{align}

The load power of auxiliary devices, like hydrogen compressors, in the HP also exhibits frequency dependence, thereby the overall frequency response of the HP can therefore be expressed as:
\begin{align}
	\Delta P_{\mathrm{HP}}\left(\tau\right)=\Delta P_{\mathrm{HP}}^{\mathrm{stack}}\left(\tau\right)+ \sum_{e \in \mathcal{E}} \Delta P_{e}^{\mathrm{auxi}}(\tau) + \sum_{c \in \mathcal{C}} \Delta P_{c}^{\mathrm{comp}}(\tau),  \label{eq:16}
\end{align}
where $ \mathcal{C} $ represent the set of hydrogen compressors, and
\begin{align}
	\{\Delta P_{e}^{\mathrm{auxi}}(\tau),\Delta P_{c}^{\mathrm{comp}}(\tau)\} = D_{d} \Delta f_{e}(\tau) \{P_{e}^{\mathrm{auxi}}, P_{c}^{\mathrm{comp}}\}.
\end{align}

Note that the available frequency reserve of each electrolyzer depends on its scheduled operating point. Since the stack efficiency $\eta^{\mathrm{stack}}=(q_e^{\mathrm{H_2}}LHV_\mathrm{H_2})/P_e^{\mathrm{stack}}(I,T)$ varies with its current load $I$ and temperature $T$, as shown in Fig.~\ref{fig:efficiency}. Electrolyzers are therefore scheduled near high-efficiency regions \cite{qiu2023extend}. This scheduling results, including on/off state and load level of each electrolyzer, directly determine both hydrogen output and available VI and PFR reserves.
Production scheduling thus directly affects frequency support capability, creating a need to balance hydrogen output and reserve provision.

\begin{figure}[t]
	\centering
	\includegraphics[width=3.75in]{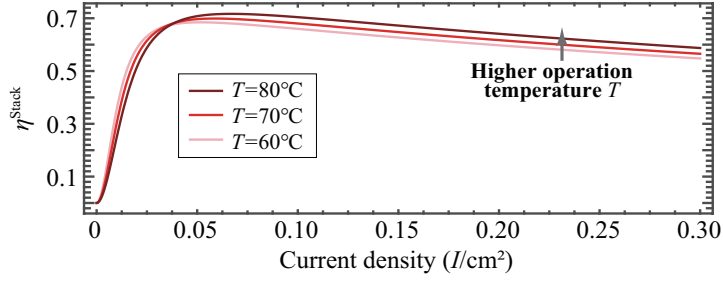}\vspace{-4.5pt}
	\caption{AWE Stack efficiency with different current density and temperature.}
	\label{fig:efficiency}\vspace{0pt}
\end{figure}

\subsection{Frequency support from AFGs, BES and downstream chemical plant}

\subsubsection{Ammonia-fueled generator (AFG)}

AFGs provide inertia, damping, and PFR through rotor and governor dynamics under frequency disturbances \cite{chu2021frequency}. Their frequency response is modeled as:
\begin{align}
	\hspace{-2pt}
	& \Delta P_{g}(\tau)=-2H_{g}\frac{\mathrm{d}\Delta f_{g}(\tau)}{\mathrm{d}\tau}-D_{g}\Delta f_{g}(\tau)+\Delta P_{g}^{_{\mathrm{PFR}}}(\tau). \label{eq:1}
\end{align}

\subsubsection{Wind turbine (WT)}

WTs can operate in a de-loading mode to create reserve margins and release reserved power to provide PFR in response to frequency deviations. The response model is written as:
\begin{gather}
	\tilde{P}_{w}^{\mathrm{avail}}=(1{-}k_{w}^{\mathrm{deload}})\tilde{P}_{w}=(1{-}k_{w}^{\mathrm{deload}})(\hat{P}_{w}+\xi_{w}), \label{eq:2}  \\
	\Delta P_{w}^{\mathrm{PFR}}(\tau) \leq \Delta P_{w}^{\mathrm{deload}}=k_{w}^{\mathrm{deload}}\tilde{P}_{w}=k_{w}^{\mathrm{deload}}\hat{P}_{w}+\xi_{w}^{'}. \label{eq:3}
\end{gather}
However, WT's PFR capability is subject to the uncertain forecast error $\xi_{w}$, which will be addressed using an uncertainty-aware tool later introduced in Section \ref{sec:DRCC-CO}.


\subsubsection{Battery energy storage (BES)}

BESs can provide frequency support under both frequency droop control and GFM control in off-grid ReP2H systems \cite{zhu2026exploring,lu2025stability}. The former enables BES to provide PFR, and the latter allows it to emulate the rotor swing dynamics of generators, thereby actively providing VI and virtual damping support. Thus, BES's frequency response is modeled as:
\begin{align}
	\Delta P_{b}(\tau)&=\Delta P_{b}^{\mathrm{PFR}}(\tau) + \Delta P_{b}^{\mathrm{GFM}}(\tau),  =\Delta P_{b}^{\mathrm{PFR}}(\tau)-\Big[2H_{b}\frac{\mathrm d\Delta f_{b}(\tau)}{\mathrm d\tau}+D_{b}\Delta f_{b}(\tau)\Big].  \label{eq:4}
\end{align}

\subsubsection{Downstream chemical plant}

The frequency response of the downstream ammonia plant originates from the frequency-dependent behavior of large-capacity electric drives including syngas compressors and pumps~\cite{zeng2025planning,yu2024ammoniaoptimal}. This response can be approximated as:
\begin{align}
	\hspace{-2pt}
	\Delta P_{d}(\tau) = D_d \Delta f(\tau) P_d. \label{eq:5}
\end{align}

\subsection{Multi-stage frequency regulation strategy}


Inertial response is instantaneous, whereas PFR is implemented by assigning differentiated deadbands for staged participation~\cite{trovato2018unit}.
To prioritize fast resources, we assign smaller deadbands to HPs and BES than to WTs and AFGs. This design enhances early frequency containment and reduces reserve requirements of slower units, thereby aligning with the economic requirements of ReP2H systems. Under this strategy, the system's total PFR is composed of:
\begin{align}
	\Delta P_{b}^{\mathrm{PFR}} &=
	\begin{cases}
		0, & \tau \le t_{\mathrm{DB1}},\\
		R_{b}^{\mathrm{PFR}}({\tau - t_{\mathrm{DB1}}})/{t_{b}}, &   t_{\mathrm{DB1}}<\tau\le t_{b}+t_{\mathrm{DB1}}, \\
		R_{b}^{\mathrm{PFR}}, & \tau > t_{b}+t_{\mathrm{DB1}},
	\end{cases}
	\label{eq:17}  \\
	\Delta P_{e}^{\mathrm{PFR}} &=
	\begin{cases}
		0, & \tau \le t_{\mathrm{DB1}},\\
		R_{e}^{\mathrm{PFR}}({\tau - t_{\mathrm{DB1}}})/{t_{e}}, & t_{\mathrm{DB1}}<\tau\le t_{e}+t_{\mathrm{DB1}}, \\
		R_{e}^{\mathrm{PFR}}, & \tau > t_{e}+t_{\mathrm{DB1}},
	\end{cases} \\
	\Delta P_{w}^{\mathrm{PFR}} &=
	\begin{cases}
		0, & \tau \le t_{\mathrm{DB2}},\\
		R_{w}^{\mathrm{PFR}}({\tau - t_{\mathrm{DB2}}})/{t_{w}}, & t_{\mathrm{DB2}}<\tau\le t_{w}+t_{\mathrm{DB2}},\\
		R_{w}^{\mathrm{PFR}}, & \tau > t_{w}+t_{\mathrm{DB2}},
	\end{cases} \\
	\Delta P_{g}^{\mathrm{PFR}} &=
	\begin{cases}
		0, & \tau \le t_{\mathrm{DB2}},\\
		R_{g}^{\mathrm{PFR}}({\tau - t_{\mathrm{DB2}}})/{t_{g}}, & t_{\mathrm{DB2}}<\tau\le t_{g}+t_{\mathrm{DB2}},\\
		R_{g}^{\mathrm{PFR}}, & \tau > t_{g}+t_{\mathrm{DB2}},
	\end{cases}
	\label{eq:20}
\end{align}
where \(t_e > t_b > (t_{\mathrm{DB2}} - t_{\mathrm{DB1}})\) and \(t_g > t_w\).
The PFR models in (\ref{eq:17})--(\ref{eq:20}) follow the assumption that the early post-contingency frequency deviation is approximately linear with time, which has been validated by previous studies~\cite{chu2021frequency,chu2024scheduling,cui2024control,wu2022incentivizing}.
The resource-specific PFR delivery times \(\{t_b, t_e, t_w, t_g\}\) in these models well preserve the respective frequency response characteristics of heterogenous resources, and exclusively designed multi-stage frequency regulation is captured by the deadband thresholds \(t_{\mathrm{DB1}}\) and \(t_{\mathrm{DB2}}\).

\section{Modeling of frequency dynamics and security metrics}
\label{sec:FreqDynamic}

\subsection{Staged frequency dynamics under disturbance}

Based on the frequency response models and the staged regulation strategy developed in Section~\ref{sec:FreqResponse}, this section establishes a unified frequency dynamic model for the off-grid ReP2H system and derives analytical expressions for transient frequency security metrics. We further demonstrate that, with engineering-acceptable accuracy, these metrics can be reformulated as tractable constraints and embedded into the ReP2H scheduling problem.

We begin by characterizing the system frequency response to a contingency-induced power disturbance. When a disturbance $\Delta P_{\mathrm{dis}}$ occurs, the frequency deviation in the off-grid ReP2H system triggers coordinated responses from AFGs, WTs, BESs, HPs, and the downstream chemical plant. The power balance during the transient process satisfies:
\begin{align}
	&\sum_{g \in \mathcal{G}} \Delta P_{g}(\tau) +  \sum_{w\in \mathcal{W}} \Delta P_{w}(\tau) + \sum_{b\in \mathcal{B}} \Delta P_{b}(\tau) + \Delta P_{\mathrm{HP}}\left(\tau\right) + \Delta P_{d}(\tau) = \Delta P_{\mathrm{dis}}, \label{eq:21}
\end{align}
\noindent
where a positive $\Delta P_{\mathrm{dis}}$ means loss of generation or increase of load. By substituting (\ref{eq:1})--(\ref{eq:5}) and (\ref{eq:17})--(\ref{eq:20}) into (\ref{eq:21}), the frequency dynamics during different PFR stages are obtained by:
\begin{align}
	&		2H \frac{\mathrm d\Delta f(\tau)}{\mathrm d\tau} + D\Delta f(\tau)= \label{eq:22}\\
	&\begin{cases}
		-\Delta P_{\text{dis}}, \
		\tau < t_{\mathrm{DB1}}, \\[0.5em]
		-\Delta P_{\text{dis}}
		+ \displaystyle\sum_{e\in \mathcal{A}} R_{e}^{\mathrm{PFR}}
		\frac{\tau - t_{\mathrm{DB1}}}{t_{e}}
		+ \displaystyle\sum_{b\in \mathcal{B}} R_{b}^{\mathrm{PFR}}
		\frac{\tau - t_{\mathrm{DB1}}}{t_{b}}, \
		t_{\mathrm{DB1}} \le \tau < t_{\mathrm{DB2}}, \\[0.5em]
		-\Delta P_{\text{dis}}
		+ \displaystyle\sum_{e\in \mathcal{A}} R_{e}^{\mathrm{PFR}}
		\frac{\tau - t_{\mathrm{DB2}}}{t_{e}}
		+ \displaystyle\sum_{b\in \mathcal{B}} R_{b}^{\mathrm{PFR}}
		\frac{\tau - t_{\mathrm{DB2}}}{t_{b}}
		+ \displaystyle\sum_{g\in \mathcal{G}} R_{g}^{\mathrm{PFR}}
		\frac{\tau - t_{\mathrm{DB2}}}{t_{g}}
		+ \displaystyle\sum_{w\in \mathcal{W}} R_{w}^{\mathrm{PFR}}
		\frac{\tau - t_{\mathrm{DB2}}}{t_{w}}, \
		\tau \ge t_{\mathrm{DB2}}. \notag
	\end{cases}
\end{align}
	
\noindent where system inertia is defined as
\begin{align}
 H = \sum_{b\in \mathcal{B}} H_{b} + \sum_{e\in \mathcal{P}} x_{e}^{\mathrm{st}}H_{e} + \frac{\sum_{g\in \mathcal{G}} x_{g}^{\mathrm{st}}H_{g} \,\overline{P}_{g}}{f_{\mathrm{N}}},
\end{align}
\noindent
where $f_{\mathrm{N}}$ is the nominal frequency; $\Delta f(\tau )$ is the COI frequency, which is expressed as
\begin{align}
	\Delta f(\tau )=\dfrac{1}{H} \Big[ \sum_{b\in \mathcal{B}}{{{H}_{b}}}\Delta {{f}_{b}}(\tau )+\sum_{e\in \mathcal{P}}x_{e}^{\mathrm{st}}{{{H}_{e}}}\Delta {{f}_{e}}(\tau )+\dfrac{\sum\nolimits_{g\in \mathcal{G}}x_{g}^{\mathrm{st}}{{{H}_{g}}\overline{P}_{g}}}{{{f}_{\mathrm{N}}}}\Delta {{f}_{g}}(\tau) \Big];
\end{align}
Since off-grid ReP2H systems generally have a short distance between generation and load \cite{zhu2026exploring,zeng2024scheduling,zeng2024investment}, frequency deviations across buses remain tightly clustered around the COI frequency~\cite{badesa2021conditions,ducoin2024analytical}. Hence, the COI frequency is adopted to represent the system-wide frequency dynamics for deriving the frequency security constraints;
$D$ is the overall system-level damping coefficient, expressed as $D={{D}_{d}}{P}_{d}+{{D}_{b}}$.

Fig.~\ref{fig:FreqDynamicSim} compares the post-contingency frequency dynamics and PFR power responses obtained from~\eqref{eq:22} with those from the electromagnetic transient (EMT) simulation benchmark developed form \cite{zhu2026exploring,cheng2025power}. It can be observed that the proposed model closely matches the benchmark in terms of maximum RoCoF, quasi-steady-state frequency deviation, and frequency nadir. The PFR power response of AWEs and AFGs depicted in Figs.~\ref{fig:FreqDynamicSim}(b) and (c) show that the PFR approximation models~\eqref{eq:17}--\eqref{eq:20} effectively capture the saturation effects and deadband-induced time delay in the frequency control loop. These results demonstrate that the proposed model~\eqref{eq:22} provides a satisfactory estimation of post-contingency frequency dynamics. Meanwhile, the slightly conservative approximation of frequency nadir and PFR power responses enhances robustness for practical operation.
\begin{figure}[t]
	\centering
	\includegraphics[width=5.5in]{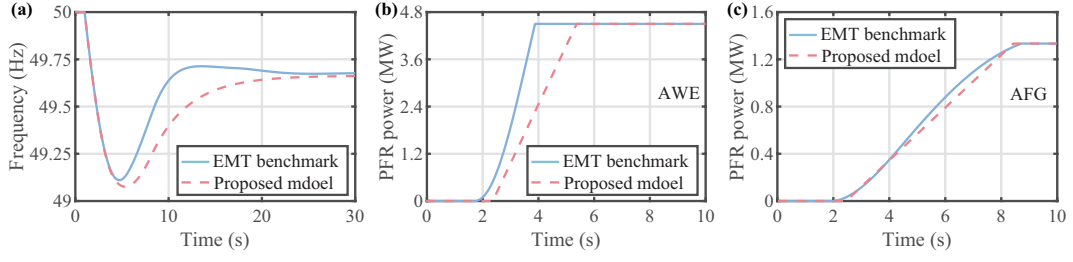}\vspace{-4.5pt}
	\caption{Comparison of the post-contingency frequency dynamics and PFR power under the proposed model~\eqref{eq:22} and the EMT benchmark. (a) Post-contingency frequency. (b) PFR power of AWEs. (c) PFR power of AFGs.}
	\label{fig:FreqDynamicSim}\vspace{0pt}
\end{figure}

\subsection{Transient frequency security metrics and constraints}

Based on (\ref{eq:22}), the transient frequency security metrics (maximum RoCoF, quasi-steady-state frequency, and frequency nadir) can be derived.

\begin{remark}
	The frequency nadir may occur uniquely in either the interval \([t_{\mathrm{DB1}}, t_{\mathrm{DB2}})\) or \([t_{\mathrm{DB2}}, \infty)\), as shown in Fig.~\ref{fig:FrequencyInterval}(a). The frequency dynamics in these intervals follow the piecewise formulations in \eqref{eq:22}, requiring two distinct expressions to characterize the nadir in each case.
\end{remark}

Here, we derive the expressions of the frequency nadir for both intervals and then propose a strategy to enforce only the unique nadir constraint in each optimization step.

\emph{1) RoCoF:} The maximum RoCoF occurs immediately after the disturbance, where the frequency deviation $\Delta f(\tau) \approx 0$ within the measurement window. Based on the first segment of \eqref{eq:22}, the RoCoF constraint is expressed as:
\begin{align}
  H \geq \dfrac{\Delta {{P}_{\mathrm{dis}}}} {2\Delta f_{\mathrm{RoCoF}}^{\mathrm{lim}}}. \label{eq:23}
\end{align}

\emph{2) Quasi-steady-state frequency:} As $\tau \to \infty$, the frequency reaches a quasi-steady state governed by the third segment of~\eqref{eq:22}. Applying the boundary condition $\mathrm{d}\Delta f/\mathrm{d}\tau=0$ gives
\begin{align}
  &\sum_{e\in \mathcal{A}}
  R_{e}^{\mathrm{PFR}}+
    \sum_{b\in \mathcal{B}}
  R_{b}^{\mathrm{PFR}}+
   \sum_{w\in \mathcal{W}}
  R_{w}^{\mathrm{PFR}} +
    \sum_{g\in \mathcal{G}}R_{g}^{\mathrm{PFR}} \ \ge \Delta {{P}_{\mathrm{dis}}} - D\Delta f_{\mathrm{qss}}^{\mathrm{lim}}. \label{eq:24}
\end{align}

\begin{figure}[t]
	\centering
	\includegraphics[width=4 in]{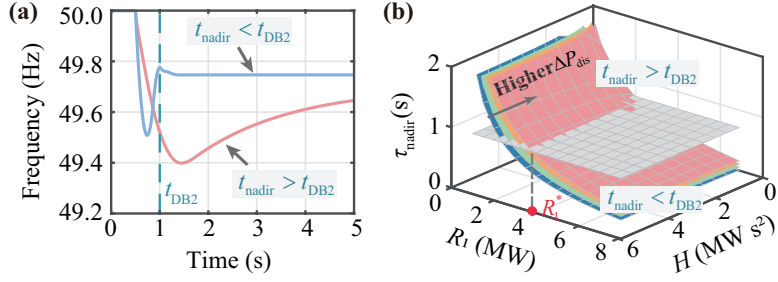}\vspace{-4.5pt}
	\caption{Frequency nadir dynamics. (a) Occurrence cases of the frequency nadir. (b) Time of the frequency nadir with varying reserve and inertia levels. }
	\label{fig:FrequencyInterval}\vspace{0pt}
\end{figure}

\emph{3) Frequency nadir:} The nadir may occur within one of two time intervals, i.e., \([t_{\mathrm{DB1}}, t_{\mathrm{DB2}})\), and \([t_{\mathrm{DB2}}, \infty)\). This leads to two set of nadir constraints.

The frequency nadir constraint within \([t_{\mathrm{DB1}}, t_{\mathrm{DB2}})\) is first derived. Assuming the initial condition \(\Delta f(0) = 0\),  the explicit solution of \eqref{eq:22} yields the time-dependent frequency trajectory:
\begin{align}
  \Delta f(\tau)=& \pm f_{\mathrm{db1}}
  + \frac{\Delta P_{\mathrm{dis}}\!\big(1 - e^{-\frac{D(\tau-t_{\mathrm{DB1}})}{2H}}\big)
  + R_{1}(\tau-t_{\mathrm{DB1}})}{D}  - \dfrac{2H R_{1}\!\big(1 - e^{-\frac{D(\tau-t_{\mathrm{DB1}})}{2H}}\big)}{D^{2}},
  \quad t_{\mathrm{DB1}} \le \tau \le t_{\mathrm{DB2}},     \label{eq:25}
\end{align}
\noindent
where $ R_{\mathrm{1}} =\sum\nolimits_{e\in \mathcal{A}}({R_{e}^{\mathrm{PFR}}}/{{t_{e}}}) +\sum\nolimits_{b\in \mathcal{B}} ({R_{b}^{\mathrm{PFR}}}/{{t_{b}}})$.

The nadir occurs when \({\mathrm{d}\Delta f(\tau)}/{\mathrm{d}\tau}=0\), thus we have:
\begin{align}
  \tau_\mathrm{naidr} = t_{\mathrm{DB1}} - \frac{2H}{D}
  \ln\!\Big(\frac{2H R_{1}}{2H R_{1} + D\Delta P_{\mathrm{dis}}}\Big).  \label{eq:26}
\end{align}

Substituting \eqref{eq:26} into \eqref{eq:25}, the frequency nadir is obtained as:
\begin{align}
  \Delta f_{\mathrm{nadir}}
  = \Delta f_{\mathrm{db1}}
  + \frac{\Delta P_{\mathrm{dis}}}{D}
  + \frac{2H R_{1}}{D^{2}}
  \ln\!\left(\frac{2H R_{1}}{D\Delta P_{\mathrm{dis}} + 2H R_{1}}\right). \label{eq:27}
\end{align}

Let $\Delta f_{\mathrm{nadir}} \le \Delta f_{\mathrm{nadir}}^{\mathrm{lim}}$ as the frequency nadir constraint, then we have:
\begin{align}
2HR_1 \ln \left(\frac{2HR_1}{D\Delta P_{\mathrm{dis}} + 2HR_1}\right)
\le \ & D^{2}(\Delta f_{\mathrm{nadir}}^{\mathrm{lim}} - \Delta f_{\mathrm{db1}}) - D\Delta P_{\mathrm{dis}}.\label{eq:28}
\end{align}

A sufficient condition for satisfying inequality~\eqref{eq:28} can be expressed as~\cite{chu2021frequency,cui2024control}:
\begin{align}
&x_{1}^{*} - \left(\sum\nolimits_{b\in \mathcal{B}}H_{b}R_{1} + \sum\nolimits_{e\in \mathcal{P}}x_{e}^{\mathrm{st}}H_{e}R_{1} + \sum\nolimits_{g\in \mathcal{G}}x_{g}^{\mathrm{st}}H_{g}R_{1}\right) \leq 0, \label{eq:29}
\end{align}
where \(x_{1}^{*}\) is obtained by solving
\begin{equation}
2x_{1}^{*}\ln\left(\frac{2x_{1}^{*}}{D\Delta P_{\mathrm{dis}} + 2x_{1}^{*}}\right)
= D^{2}(\Delta f_{\mathrm{nadir}}^{\mathrm{lim}} - \Delta f_{\mathrm{db1}}) - D\Delta P_{\mathrm{dis}}.
\label{eq:32}
\end{equation}

Using a similar method, the frequency nadir constraint corresponding to the occurrence interval \([t_{\mathrm{DB2}}, \infty)\) can be derived as:
\begin{align}
& x_{2}^{*} - \Big(\sum_{b\in \mathcal{B}}H_{b}R_{2} + \sum_{e\in \mathcal{P}}x_{e}^{\mathrm{st}}H_{e}R_{2} + \sum_{g\in \mathcal{G}}x_{g}^{\mathrm{st}}H_{g}R_{2}\Big)   \leq 0, \label{eq:31}
\end{align}
where $ R_{2}
= \sum\nolimits_{e \in \mathcal{A}} \bigl( R_{e}^{\mathrm{PFR}} / t_{e} \bigr)
+ \sum\nolimits_{b \in \mathcal{B}} \bigl( R_{b}^{\mathrm{PFR}} / t_{b} \bigr)
+ \sum\nolimits_{g \in \mathcal{G}} \bigl( R_{g}^{\mathrm{PFR}} / t_{g} \bigr)
+ \sum\nolimits_{w \in \mathcal{W}} \bigl( R_{w}^{\mathrm{PFR}} / t_{w} \bigr)$; and the value of \(x_{2}^{*}\) is obtained by solving
\begin{equation}
2x_{2}^{*}\ln\left(\frac{2x_{2}^{*}}{D\Delta P_{\mathrm{dis}} + 2x_{2}^{*}}\right)
= D^{2}(\Delta f_{\mathrm{nadir}}^{\mathrm{lim}} - \Delta f_{\mathrm{db2}}) - D\Delta P_{\mathrm{dis}}.
\label{eq:34}
\end{equation}

Since the nadir occurs uniquely in one interval of \(\tau \in [t_{\mathrm{DB1}}, t_{\mathrm{DB2}})\) and \(\tau \in [t_{\mathrm{DB2}}, \infty)\), only one of~\eqref{eq:29} or~\eqref{eq:31} should be enforced. Proposition \ref{prop:1} establishes monotonicity properties of $\tau_{\mathrm{nadir}}(H,R_1)$ with respect to $R_1$, and then finds threshold values $R_1^{\mathrm{lo}}$ and $R_1^{\mathrm{hi}}$ to determine the active region.
\begin{proposition}
	\label{prop:1}
	Let $\tau_{\mathrm{nadir}}(H,R_1)$ denote~\eqref{eq:26} under a known $\Delta P_{\mathrm{dis}}$.
	For any $\Delta P_{\mathrm{dis}}>0$ and $H>0$, there exists a unique
	$R_1^*(H)>0$ such that $\tau_{\mathrm{nadir}}\big(H,R_1^*(H)\big)=t_{\mathrm{DB2}}$, as shown in Fig.~\ref{fig:FrequencyInterval}(b).
	Define
	$
	R_1^{\mathrm{lo}}:=R_1^*(\underline H),\
	R_1^{\mathrm{hi}}:=R_1^*(\overline H),
	$
	where $\underline H$ and $\overline H$ are the inertia bounds.
	Then, for all $H\in[\underline H,\overline H]$,
	\begin{gather}
		 R_1\ge R_1^{\mathrm{hi}}
		\ \Rightarrow\
		\tau_{\mathrm{nadir}}(H,R_1)\le t_{\mathrm{DB2}}, \\
		 R_1\le R_1^{\mathrm{lo}}
		\ \Rightarrow\
		\tau_{\mathrm{nadir}}(H,R_1)\ge t_{\mathrm{DB2}}.
	\end{gather}
	Moreover, there exists a small time margin $\mu\ge 0$ such that for any
	$R_1\in(R_1^{\mathrm{lo}},R_1^{\mathrm{hi}})$ and any $H\in[\underline H,\overline H]$,
	$
	\tau_{\mathrm{nadir}}(H,R_1)\ \ge\ t_{\mathrm{DB2}}-\mu.
	$
	Consequently, an approximate binary reformulation of the frequency nadir constraint can be written as:
	\begin{gather}
		 R_{1} \ge R_{1}^{\mathrm{hi}} - M(1-\delta),\\
		 R_{1} \le R_{1}^{\mathrm{hi}} + M\delta,\\
         \varphi_{1}(x) \le M(1-\delta), \label{eq:36}\\
		 \varphi_{2}(x) \le M\delta, \label{eq:37}
	\end{gather}
	where $\delta\in\{0,1\}$; $\varphi_{1}(x)$ and $\varphi_{2}(x)$ are the left-hand sides of~\eqref{eq:29} and~\eqref{eq:31}, respectively.
\end{proposition}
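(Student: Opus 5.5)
The plan rests on the closed form \eqref{eq:26}, which I rewrite as
\[
\tau_{\mathrm{nadir}}(H,R_1)=t_{\mathrm{DB1}}+\frac{2H}{D}\ln\Big(1+\frac{D\Delta P_{\mathrm{dis}}}{2HR_1}\Big),
\]
with $D>0$ and $\Delta P_{\mathrm{dis}}>0$ fixed. The argument then reduces to two monotonicity facts on the domain $H>0$, $R_1>0$, followed by a compactness argument for the margin $\mu$ and a case analysis for the big-$M$ reformulation.

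First, for fixed $H$, the map $R_1\mapsto\tau_{\mathrm{nadir}}$ is continuous and strictly decreasing. Its limit is $+\infty$ as $R_1\to0^+$ and $t_{\mathrm{DB1}}$ as $R_1\to\infty$. Since $t_{\mathrm{DB2}}>t_{\mathrm{DB1}}$, the intermediate value theorem together with strict monotonicity gives a unique $R_1^*(H)>0$ with $\tau_{\mathrm{nadir}}(H,R_1^*(H))=t_{\mathrm{DB2}}$. This root can in fact be solved explicitly:
\[
R_1^*(H)=\frac{D\Delta P_{\mathrm{dis}}}{2H\big(e^{D(t_{\mathrm{DB2}}-t_{\mathrm{DB1}})/(2H)}-1\big)}.
\]

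Second, for fixed $R_1$, I will show $\tau_{\mathrm{nadir}}$ is increasing in $H$. Set $a=D/(2H)$ and $c=\Delta P_{\mathrm{dis}}/R_1$, so that $\tau_{\mathrm{nadir}}-t_{\mathrm{DB1}}=\ln(1+ca)/a$. This function is decreasing in $a$ by concavity of the logarithm: its derivative has the sign of $ca/(1+ca)-\ln(1+ca)<0$. Combining the two facts, $R_1^*(H)$ is nondecreasing in $H$, either by the implicit-function argument or by differentiating the explicit formula, using that $x\mapsto x/(e^{x}-1)$ is decreasing. Hence $R_1^{\mathrm{lo}}\le R_1^*(H)\le R_1^{\mathrm{hi}}$ for every $H\in[\underline H,\overline H]$.

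The two implications then follow directly.
\begin{itemize}
\item If $R_1\ge R_1^{\mathrm{hi}}\ge R_1^*(H)$, monotonicity in $R_1$ gives $\tau_{\mathrm{nadir}}(H,R_1)\le\tau_{\mathrm{nadir}}(H,R_1^*(H))=t_{\mathrm{DB2}}$.
\item If $R_1\le R_1^{\mathrm{lo}}\le R_1^*(H)$, the same monotonicity gives $\tau_{\mathrm{nadir}}(H,R_1)\ge t_{\mathrm{DB2}}$.
\end{itemize}

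For the margin, on the box $[\underline H,\overline H]\times[R_1^{\mathrm{lo}},R_1^{\mathrm{hi}}]$ the function $\tau_{\mathrm{nadir}}$ is increasing in $H$ and decreasing in $R_1$. Its minimum is therefore attained at $(\underline H,R_1^{\mathrm{hi}})$, and I take
\[
\mu:=\max\{0,\;t_{\mathrm{DB2}}-\tau_{\mathrm{nadir}}(\underline H,R_1^{\mathrm{hi}})\}.
\]
This is small whenever $\overline H/\underline H$ is moderate, and it can be quantified via the explicit formula.

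Finally, I justify the binary reformulation by cases on $\delta$.
\begin{itemize}
\item If $\delta=1$, the constraints force $R_1\ge R_1^{\mathrm{hi}}$ and relax $\varphi_2$. The nadir then lies in $[t_{\mathrm{DB1}},t_{\mathrm{DB2}}]$, so \eqref{eq:29} is the correct constraint and \eqref{eq:36} enforces it.
\item If $\delta=0$, the constraints force $R_1\le R_1^{\mathrm{hi}}$ and enforce $\varphi_2\le0$. For $R_1\le R_1^{\mathrm{lo}}$ this is exact. For $R_1\in(R_1^{\mathrm{lo}},R_1^{\mathrm{hi}})$ the nadir occurs no earlier than $t_{\mathrm{DB2}}-\mu$, so using the second-stage constraint \eqref{eq:31} is an approximation that misplaces the nadir by at most $\mu$.
\end{itemize}
Here $M$ must be large enough to deactivate the relaxed rows over the bounded feasible set of reserves and inertia.

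I expect the main obstacle to be making this last approximation rigorous: one has to bound how much $\Delta f_{\mathrm{nadir}}$ changes when the nadir occurs within $\mu$ before $t_{\mathrm{DB2}}$ but is evaluated with the stage-two expression. I would first try to bound this via continuity of the trajectory \eqref{eq:25} at $t_{\mathrm{DB2}}$. Failing that, I would present the statement as an approximate reformulation, which is how the proposition is phrased.
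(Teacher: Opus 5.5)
Your proposal is correct and follows the paper's argument. It uses the same ingredients: $\tau_{\mathrm{nadir}}$ strictly decreasing in $R_1$ and increasing in $H$, the explicit root $R_1^*(H)$, and $\mu$ read off at the corner $(\underline H,R_1^{\mathrm{hi}})$. Your $\max\{0,\cdot\}$ is redundant, since $\tau_{\mathrm{nadir}}(\underline H,R_1^{\mathrm{hi}})\le\tau_{\mathrm{nadir}}(\overline H,R_1^{\mathrm{hi}})=t_{\mathrm{DB2}}$ already gives $\mu\ge 0$. You also supply steps the paper only asserts: existence and uniqueness of $R_1^*$, the sign of $\partial\tau_{\mathrm{nadir}}/\partial H$, and the case analysis on $\delta$. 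The paper, like you, leaves the misplaced-nadir case as an approximation.

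One correction: $\mu$ is not small merely because $\overline H/\underline H$ is moderate. With $\Delta t:=t_{\mathrm{DB2}}-t_{\mathrm{DB1}}$, the paper's closed form gives $\mu/\Delta t\approx\frac{D\Delta t}{4}\bigl(\frac{1}{\underline H}-\frac{1}{\overline H}\bigr)$. For fixed $r=\overline H/\underline H$, one has $\mu/\Delta t\to 1-1/r$ as $D\Delta t/\underline H\to\infty$. Smallness therefore also requires $D\Delta t/\underline H$ to be small.
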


\begin{proof}
	For any $R_1>0$ and $H>0$, differentiating $\tau_{\mathrm{nadir}}(H,R_1)$ with respect to $R_1$ and $H$ yields
	${\partial \tau_{\mathrm{nadir}}}/{\partial R_1} < 0$ and
	${\partial \tau_{\mathrm{nadir}}}/{\partial H} > 0$.
	Thus, $\tau_{\mathrm{nadir}}$ strictly decreases with $R_1$ and increases with $H$.
	By definition of $R^{\mathrm{hi}}$, $\tau(\overline{H},R^{\mathrm{hi}})=t_{\mathrm{DB2}}$.
	For any $R_1 \ge R^{\mathrm{hi}}$ with $H \in [\underline{H},\overline{H}]$, we have:
	\vspace{-1pt}
	\[
	\tau_{\mathrm{nadir}}(H,R_1)
	\ \le\ \tau_{\mathrm{nadir}}(\overline H,R_1)
	\ \le\ \tau_{\mathrm{nadir}}(\overline H,R_1^{\mathrm{hi}})
	=t_{\mathrm{DB2}}, \
	\]
	for any $R_1 \le R_1^{\mathrm{lo}}$ with $H \in [\underline{H},\overline{H}]$, we have:
	\[
	\tau_{\mathrm{nadir}}(H,R_1)
	\ \ge\ \tau_{\mathrm{nadir}}(\underline H,R_1)
	\ \ge\ \tau_{\mathrm{nadir}}(\underline H,R_1^{\mathrm{lo}})
	=t_{\mathrm{DB2}}.
	\]
	
	For $R_1\in(R_1^{\mathrm{lo}},R_1^{\mathrm{hi}})$, monotonicity in $H$ implies\vspace{-2pt}
	\[
	\tau_{\mathrm{nadir}}(H,R_1) \;\ge\;
	\tau_{\mathrm{nadir}}(\underline H,R_1) \;\ge\;
	\tau_{\mathrm{nadir}}(\underline H,R_1^{\mathrm{hi}}) = t_{\mathrm{DB2}}-\mu,
	\]
	which defines $\mu:=t_{\mathrm{DB2}}-\tau_{\mathrm{nadir}}(\underline H,R_1^{\mathrm{hi}})\ge 0$.
	To obtain the closed form of $\mu$, first solve
	$\tau_{\mathrm{nadir}}(\overline H,R_1^{\mathrm{hi}})=t_{\mathrm{DB2}}$
	for $R_1^{\mathrm{hi}}$:
	\begin{gather}
		R_1^{\mathrm{hi}}
		= D\,\Delta P_{\mathrm{dis}} / \big( 2\overline H (e^{\beta/\overline H} - 1) \big), \\
		\beta = D\,(\Delta t / 2),\label{eq:proof}
	\end{gather}
	where $\Delta t = t_{\mathrm{DB2}}-t_{\mathrm{DB1}}$.
	Substituting \eqref{eq:proof} into
	$\tau_{\mathrm{nadir}}(\underline H,R_1^{\mathrm{hi}})$:
	\begin{align}
		\mu = \frac{2}{D} \Big[ \beta - \underline H\,\ln\!\big(1 + \dfrac{\overline H (e^{\beta/\overline H} - 1)}{\underline H}
		\big) \Big].
	\end{align}
\end{proof}

To further evaluate the magnitude of $\mu$, we apply a second-order expansion
$e^a-1=a+a^2/2+O(a^3)$ and $\ln(1+x)=x-x^2/2+O(x^3)$, yielding
\vspace{-2pt}
\begin{align}
	\mu = \dfrac{\beta^2}{D} \frac{\overline H-\underline H}{\overline H\,\underline H} + O \Big(\dfrac{\beta^3}{D \overline H^2} \Big).
\end{align}
Normalizing by $\Delta t=2\beta/D$ gives
\vspace{-1pt}
\begin{align}
	\mu / \Delta t \approx \dfrac{D \Delta t} {4}\dfrac{\overline H-\underline H}{\overline H \underline H},
\end{align}
which typically leads to a relative error of only a few percent for moderate $\Delta t$ and $ H$,
and can thus be regarded as negligible in practice.

\section{FSPS-embeded DRCC-FC scheduling of the off-grid ReP2H system}
\label{sec:SchedulingModel}

\begin{figure}[t]
	\centering
	\includegraphics[width=5.7in]{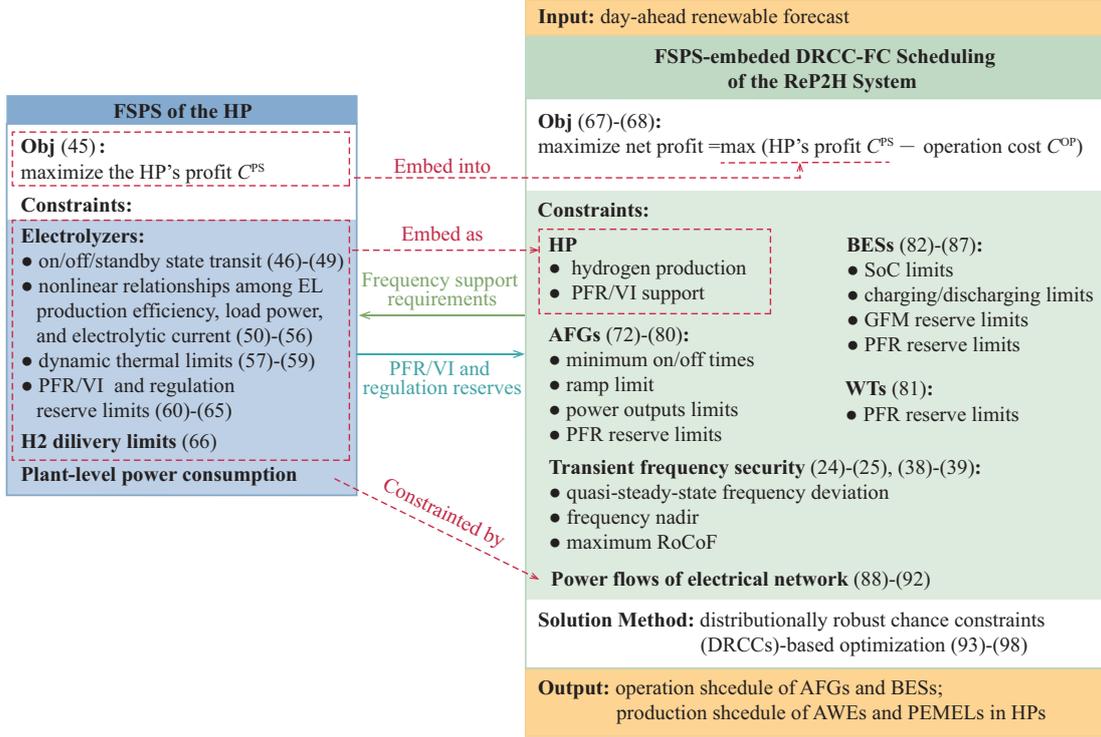}\vspace{-4.5pt}
	\caption{The framework of the FSPS-embeded DRCC-FC scheduling model of the off-grid ReP2H system.}
	\label{fig:OptiFramework}
\end{figure}
This section develops a frequency-supporting production scheduling (FSPS) model for the HP to ensure reliable frequency support without sacrificing hydrogen production profitability. The FSPS is then embedded into an FC scheduling framework for the off-grid ReP2H system based on the security constraints derived in Section~\ref{sec:FreqDynamic}. To address renewable uncertainty, distributionally robust chance constraints (DRCCs) are employed. The overall framework of the scheduling model is illustrated in Fig. \ref{fig:OptiFramework}. The resulting FSPS-embedded DRCC-FC approach jointly ensures economic efficiency and transient frequency security for the off-grid ReP2H system.

\subsection{FSPS formulation of the HP}
\emph{1) Objective function:}
the FSPS aims to maximize the HP's profit $C^{\mathrm{PS}}$, which is defined as hydrogen production revenues minus ELs' operating costs:
\begin{align}
	&C^{\mathrm{PS}}=
	\sum_{{t\in \mathcal{T}}}\sum_{{e\in \mathcal{E}}}
	\big(
	c_{e}^{\mathrm{H_{2}}} q_{e,t} \Delta t
	- c_{e}^{\mathrm{up}} z_{e,t}^{\mathrm{up,c}}
	- c_{e}^{\mathrm{down}} z_{e,t}^{\mathrm{down}}
	- c_{e}^{\mathrm{sb}} x_{e,t}^{\mathrm{sb}}
	\big),   \label{eq:66}
\end{align}
where $C^{\mathrm{PS}}$ is the FSPS objective representing the HP's profit, comprising ELs' operating costs and hydrogen production revenues;

\emph{2) Operational constraints of ELs:}
following our previous work~\cite{zhu2026exploring,zeng2024scheduling}, the start-up/standby/shutdown state transitions of an electrolyzer are described by \eqref{eq:67}--\eqref{eq:68}; the nonlinear relationships among EL production efficiency, load power, and electrolytic current are approximated by \eqref{eq:69}--\eqref{eq:72} using piecewise linearization and polynomial fitting. The auxiliary power are modeled as $P_{e,t}^{\mathrm{cool}} = \eta_{e}^{\mathrm{cool}} h_{e,t}^{\mathrm{cool}}$ and $P_{e,t}^{\mathrm{pump}} = K^\mathrm{pump}q_{e,t}^{\mathrm{H_2}}$; $\delta_{t}^{\mathrm{sb}}$ and $\delta_{t}^{\mathrm{sd}}$ are binary auxiliary variables.
\begin{gather}
	x_{e,t}^{\mathrm{st}} + x_{e,t}^{\mathrm{sb}} + x_{e,t}^{\mathrm{sd}} = 1, \\
	 z_{e,t}^{\mathrm{down}} = x_{e,t-1}^{\mathrm{st}} x_{e,t}^{\mathrm{sd}}, \label{eq:67}\\
	z_{e,t}^{\mathrm{up,h}} = x_{e,t-1}^{\mathrm{sb}} x_{e,t}^{\mathrm{st}}, \\
	 z_{e,t}^{\mathrm{up,c}} = x_{e,t-1}^{\mathrm{sd}} x_{e,t}^{\mathrm{st}}, \label{eq:68}\\
	q_{e,t}^{\mathrm{H_2}} \le \mathbf{A} I_{t} + \mathbf{B} x_{e,t}^{\mathrm{st}}, \\
      x_{e,t}^{\mathrm{st}} \underline{I} \le I_{t} \le x_{e,t}^{\mathrm{st}} \overline{I}, \label{eq:69}\\
	P_{e,t} = P_{e,t}^{\mathrm{stack}}(I,T) + P_{e,t}^{\mathrm{auxi}}, \\
	P_{e,t}^{\mathrm{auxi}} = x_{e,t}^{\mathrm{st}} P_{e,t}^{\mathrm{cool}} + P_{e,t}^{\mathrm{pump}}, \label{eq:70}\\
	P_{e,t}^{\mathrm{stack}}(I,T) = a_{e}^{1} I_{e,t} + a_{e}^{2} T_{e,t}
	+ x_{e,t}^{\mathrm{st}} a_{e}^{3} + \delta_{t}^{\mathrm{sb}} + \delta_{t}^{\mathrm{sd}}, \label{eq:71}\\
	-x_{e,t}^{\mathrm{sb}} M \le \delta_{t}^{\mathrm{sb}} \le x_{e,t}^{\mathrm{sb}} M, \\
	 -x_{e,t}^{\mathrm{sd}} M \le \delta_{t}^{\mathrm{sd}} \le x_{e,t}^{\mathrm{sd}} M. \label{eq:72} 
\end{gather}

The temperature dynamics of the electrolysis stack, which affect the P2H energy conversion efficiency, are modeled by~\eqref{eq:temp-1}--\eqref{eq:temp-2}, while the allowable operating temperature of the stack is constrained by~\eqref{eq:temp-3}.
\begin{gather}
	C_{e}^{\mathrm{heat}}
	(T_{e,t+1} - T_{e,t})
	= (P_{e,t}^{\mathrm{stack}} - n_{c} I_{e,t} V^{\mathrm{tn}}
	- h_{e,t}^{\mathrm{cool}}) \Delta t, \label{eq:temp-1} \\
	0 \le h_{e,t}^{\mathrm{cool}}
	\le a_{e}^{\mathrm{cool}} (T_{e,t} - T^{\mathrm{cool}}), \label{eq:temp-2} \\
      \underline{T}_e \le T_{e,t} \le \overline{T}_e \label{eq:temp-3}.
\end{gather}

To guarantee the HP delivers adequate frequency support, PEMELs and AWEs are required to satisfy the following PFR and VI reserve constraints, respectively:
\begin{gather}
	 P_{e,t} - R_{e,t}^{-} - R_{e,t}^{\mathrm{PFR}}
	\ge x_{e,t}^{\mathrm{sb}} P_{e,t}^{\mathrm{sb}}+ x_{e,t}^{\mathrm{st}} \underline{P}_{e}, \quad \forall e \in \mathcal{A}, \\
	 P_{e,t} + R_{e,t}^{+} + R_{e,t}^{\mathrm{PFR}}
	\le x_{e,t}^{\mathrm{sb}} P_{e,t}^{\mathrm{sb}}+ x_{e,t}^{\mathrm{st}} \overline{P}_{e}, \quad \forall e \in \mathcal{A},\\
 0 \le R_{e,t}^{\mathrm{PFR}} \le x_{e,t}^{\mathrm{st}} R_{e}^{\mathrm{PFR,lim}}, \quad \forall e \in \mathcal{A},\\
	 0 \le R_{e,t}^{-} \le x_{e,t}^{\mathrm{st}} R_{e}^{-,\mathrm{lim}}, \ 0 \le R_{e,t}^{+} \le x_{e,t}^{\mathrm{st}} R_{e}^{+,\mathrm{lim}}, \quad \forall e \in \mathcal{A},\\
	 P_{e,t} - R_{e,t}^{\mathrm{VI}}
	\ge x_{e,t}^{\mathrm{sb}} P_{e,t}^{\mathrm{sb}}+ x_{e,t}^{\mathrm{st}} \underline{P}_{e},\quad \forall e \in \mathcal{P}, \\
	 P_{e,t} + R_{e,t}^{\mathrm{VI}}
	\le x_{e,t}^{\mathrm{sb}} P_{e,t}^{\mathrm{sb}}+ x_{e,t}^{\mathrm{st}} \overline{P}_{e},\quad \forall e \in \mathcal{P},\label{eq:83}
\end{gather}
where the VI reserve are estimated as $R_{e,t}^{VI}=x_{e,t}^{\mathrm{st}}{{H}_{e}}\Delta f_{RoCoF}^{\mathrm{lim }}$.

\emph{3) Hydrogen delivery constraint:}
the hydrogen delivered by the HP to the downstream is constrained by the allowable throughput of the compressors. Thus the hydrogen production rate needs satisfy:
\begin{equation}
	\sum_{e\in \mathcal{E}} q_{e,t}^{\mathrm{H_2}}  \le \sum_{c\in \mathcal{C}} q_{c,t}^{\lim}. \label{eq:84}
\end{equation}

Note that more detailed pipeline and linepack constraints can also be imposed such as that in \cite{zeng2025planning}. Due to space limits and without loss of generality, they are not considered in this work.

\emph{4) Power consumption constraints:}
the power consumption of the HP is subject to system-level power balance constraints, which are detailed in Section~\ref{sec:power_flow}.

\subsection{FSPS-embedded DRCC-FC scheduling of the ReP2H system}

\emph{1) Objective function:} to embed the FSPS into the DRCC-based FC scheduling framework, the objective function of the system-level scheduling is set to maximize the net profit of the off-grid ReP2H system, as:
\begin{align}
	\max\; C^{\mathrm{net}} = C^{\mathrm{PS}} - C^{\mathrm{OP}}, \label{eq:85}
\end{align}
with
\begin{align}
	C^{\mathrm{OP}}=
	&\sum\limits_{t\in \mathcal{T}}\sum\limits_{g\in \mathcal G}
	\big(
	c_{g}^{\mathrm{NH_{3}}} q_{g,t}^{\mathrm{NH_{3}}}\Delta t
	+ c_{g}^{\mathrm{up}} z_{g,t}^{\mathrm{up}}  \big), \label{eq:42}
\end{align}
where $C^{\mathrm{OP}}$ is the system operation cost, including the fuel and start-up costs of AFGs.

\emph{2) Operational constraints of AFG:} the relationship  between power output and fuel consumption of AFGs is given by~\eqref{eq:47}, where $\eta_g^\mathrm{comb}$ and $\eta_g^\mathrm{steam}$ denote efficiency factors. AFG operational constraints include minimum on/off times~\eqref{eq:45}--\eqref{eq:46}, power outputs limits~\eqref{eq:48}--\eqref{eq:49}, PFR and regulation reserve limits~\eqref{eq:50}--\eqref{eq:51}, and ramping limits~\eqref{eq:52}--\eqref{eq:53}.
\begin{gather}
	P_{g,t}=\eta_g^\mathrm{comb}\eta_g^\mathrm{steam}LHV_\mathrm{NH_3}q_{g,t}^\mathrm{NH_3}, \label{eq:47}\\
	z_{g,t}^{\mathrm{up}}-z_{g,t}^{\mathrm{down}} = x_{g,t}^{\mathrm{st}}-x_{g,t-1}^\mathrm{st},\\
	z_{g,t}^{\mathrm{up}}+z_{g,t}^{\mathrm{down}} \leq 1,  \label{eq:43} \\
	T_{g}^{\mathrm{off}}(1-x_{g,t}^{\mathrm{st}})\leq\sum\nolimits_t^{t-1+T_{g}^{\mathrm{off}}}z_{g,t}^{\mathrm{down}}, \quad \forall t\leq T+1-T_{g}^{\mathrm{off}},  \label{eq:45}\\
	T_{g}^{\mathrm{on}}x_{g,t}^{\mathrm{st}} \leq\sum\nolimits_t^{t-1+T_{g}^{\mathrm{on}}}z_{g,t}^{\mathrm{up}}, \quad \forall t\leq T+1-T_{g}^{\mathrm{on}},  \label{eq:46}\\
	P_{g,t} - R_{g,t}^{-} - R_{g,t}^{\mathrm{PFR}} \ge x_{g,t}^{\mathrm{st}} \underline{P}_{g}, \label{eq:48}\\
	P_{g,t} + R_{g,t}^{+} + R_{g,t}^{\mathrm{PFR}} \le x_{g,t}^{\mathrm{st}} \overline{P}_{g}, \label{eq:49}\\
	0 \le R_{g,t}^{\mathrm{PFR}} \le x_{g,t}^{\mathrm{st}} R_{g}^{\mathrm{PFR,lim}}, \label{eq:50}\\
	0 \le R_{g,t}^{-} \le x_{g,t}^{\mathrm{st}} R_{g}^{-,\mathrm{lim}}, \\
	0 \le R_{g,t}^{+} \le x_{g,t}^{\mathrm{st}} R_{g}^{+,\mathrm{lim}}, \label{eq:51}\\
	P_{g,t} + R_{g,t}^{+} + R_{g,t}^{\mathrm{PFR}}
	- P_{g,t-1} - R_{g,t-1}^{-} - R_{g,t-1}^{\mathrm{PFR}}
	\le r_{g}^u, \label{eq:52}\\
	P_{g,t-1} - R_{g,t-1}^{-} - R_{g,t-1}^{\mathrm{PFR}}
	- P_{g,t} + R_{g,t}^{+} + R_{g,t}^{\mathrm{PFR}}
	\le r_{g}^d.\label{eq:53}
\end{gather}

\emph{3) Constraints of WTs:} the PFR provided by WTs needs to remain within the reserved margin under deloading operation. To account for WT output forecast uncertainties, this requirement is modeled as a distributionally robust chance constraint:
\begin{equation}
	\inf_{\mathbb{P} \in \mathbb{P}_{w}}
	\mathbb{P} \{ R_{w,t}^{\mathrm{PFR}}
	\le k_{w}^{\mathrm{deload}}\hat{P}_{w}+\xi_{w}^{'}  \}
	\ge 1 - \rho_{w}.
	\label{eq:86}
\end{equation}

\emph{4) Constraints of BESs:} the operation of BESs must satisfy the power and state-of-charge (SoC) constraints~\eqref{eq:55}--\eqref{eq:57}, where $R_{b,t}^{\mathrm{FR}} = R_{b,t}^{\mathrm{PFR}} + R_{b,t}^{\mathrm{GFM}}$, and the GFM reserve is estimated as $R_{b}^{\mathrm{GFM}} \approx \sum_{{b\in \mathcal{B}}} H_{b} \cdot \Delta f_{\mathrm{RoCoF}}^{\mathrm{lim}}$.
\begin{gather}
	0 \le P_{b,t}^{\mathrm C} + R_{b}^{+} + R_{b}^{\mathrm{PFR}} + R_{b}^{\mathrm{GFM}}
	\le x_{b,t}^{\mathrm C} \overline{P}_{b,t}, \label{eq:55}\\
	0 \le P_{b,t}^{\mathrm D} + R_{b}^{-} + R_{b}^{\mathrm{PFR}} + R_{b}^{\mathrm{GFM}}
	\le (1 - x_{b,t}^{\mathrm C}) \overline{P}_{b,t}, \label{eq:56}\\
	E_{b,t} = (1-\mu_{b}) E_{b,t-1} +
	( \eta_{b}^{\mathrm C} P_{b,t}^{\mathrm C}
	- P_{b,t}^{\mathrm D}/\eta_{b}^{\mathrm D} ) \Delta t, \label{eq:58}\\
	\frac{R_{b,t}^{\mathrm{FR}}+ R_{b,t}^{-}}{\eta_{b}^{\mathrm D}}
	+ \underline{E}_{b} \le E_{b,t}
	\le \frac{R_{b,t}^{\mathrm{FR}} + R_{b,t}^{+}}{\eta_{b}^{\mathrm C}}
	+ \overline{E}_b, \label{eq:60} \\
	R_{b}^{+} \ge 0, \ R_{b}^{\mathrm{PFR}} \ge 0, \\
	E_{b,0} = E_{b,\mathcal{T}}. \label{eq:57}
\end{gather}

\emph{5) Power flow constraints of the electrical network:}
\label{sec:power_flow}
ReP2H systems generally employ a radial topology \cite{zeng2024investment,zeng2024scheduling,zeng2025planning,zhu2026exploring}. Therefore, the power flow constraints in the scheduling problem can be approximated by a linearized DistFlow model~\cite{farivar2013branch}:
\begin{gather}
	P_{i,t}^{\mathrm{net}} + \sum\nolimits_{k \in \mathcal{F}(i)} P_{ki,t}
	= \sum\nolimits_{j \in \mathcal{\varsigma}(i)} P_{ij,t},
	\label{eq:pf-1} \\
	V_{i,t}^{2} - V_{j,t}^{2} = 2 R_{ij} P_{ij,t},
	\label{eq:pf-2} \\
	\underline{P}_{ij} \le P_{ij,t} \le \overline{P}_{ij},
	\label{eq:pf-3} \\
	\underline{V}_{i}^{2} \le V_{i,t}^{2} \le \overline{V}_{i}^{2}.
	\label{eq:pf-4}
\end{gather}
Here, \eqref{eq:pf-1} enforces nodal active power balance, and \eqref{eq:pf-2} relates squared voltage magnitude to branch active power flow. Constraints \eqref{eq:pf-3}--\eqref{eq:pf-4} impose branch flow limits and bus voltage bounds, respectively.
The net injected power at bus $i$ is modeled as:
\begin{align}
	P_{i,t}^{\mathrm{net}}
	=& \sum\nolimits_{g \in \mathcal{G}(i)} P_{g,t}
	+ \sum\nolimits_{w \in \mathcal{W}(i)} \bigl(1 - k_{w,t}^{\mathrm{deload}}\bigr)\,\hat{P}_{w,t}
	+ \hat{P}_{s,i,t} \nonumber\\
	&+ \sum\nolimits_{b \in \mathcal{B}(i)} P_{b,t}^{\mathrm{D}}
	- \sum\nolimits_{b \in \mathcal{B}(i)} P_{b,t}^{\mathrm{C}}
	- P_{\mathrm{HP},i,t}
	- P_{d,i,t},
	\label{eq:62}
\end{align}
where the power consumption of the HP is given by
$
P_{\mathrm{HP},i,t}
= \sum\nolimits_{e \in \mathcal{E}(i)} P_{e,t}
+ \sum\nolimits_{c \in \mathcal{C}(i)} P_{c,t}^{\mathrm{comp}} .
$

To address uncertainties of renewable outputs, we employ the electrolyzers, BESs, and AFGs to deploy upward and downward reserves to mitigate renewable forecast errors under an affine policy, similar to that in our previous work \cite{zhu2026exploring}. This adjustment is formulated as a distributionally robust chance constraint:
\begin{align}
	\inf_{\mathbb{P} \in \mathbb{P}_{r}}
	\mathbb{P} \Big\{
	&(\boldsymbol{\alpha}_{e,t}^{\mathrm{T}}
	+\boldsymbol{\alpha}_{g,t}^{\mathrm{T}}
	+\boldsymbol{\alpha}_{b,t}^{\mathrm{T}})\boldsymbol{\xi}_{r}
	-(R_{g,t}^{+}+R_{e,t}^{+}+R_{b,t}^{+})
	\le 0, \notag\\
	&(\boldsymbol{\alpha}_{e,t}^{\mathrm{T}}
	+\boldsymbol{\alpha}_{g,t}^{\mathrm{T}}
	+\boldsymbol{\alpha}_{b,t}^{\mathrm{T}})\boldsymbol{\xi}_{r}
	-(R_{g,t}^{-}+R_{e,t}^{-}+R_{b,t}^{-})
	\le 0
	\Big\}  \ge  1-\rho_{r}, \label{eq:87}
\end{align}
where $\boldsymbol{\xi}_{r} = \{\xi_{w}^{'}, \xi_{s}\}$; $\boldsymbol{\alpha}_{g,t} = \{\alpha_{g,t}^{w}, \alpha_{g,t}^{s}\}$, $\boldsymbol{\alpha}_{e,t} = \{\alpha_{e,t}^{w}, \alpha_{e,t}^{s}\}$, and $\boldsymbol{\alpha}_{b,t} = \{\alpha_{b,t}^{w}, \alpha_{b,t}^{s}\}$ are adjustment coefficients satisfying $\mathbf{-1} \le \{\boldsymbol{\alpha}_{g,t}^{\mathrm{T}}, \boldsymbol{\alpha}_{e,t}^{\mathrm{T}}, \boldsymbol{\alpha}_{b,t}^{\mathrm{T}}\}^{\mathrm{T}} \le \mathbf{1}$. To ensure power balance under uncertainties, the affine coefficients must satisfy $\boldsymbol{\alpha}_{g,t} + \boldsymbol{\alpha}_{e,t} + \boldsymbol{\alpha}_{b,t} = \mathbf{1}$. Moreover, regulation reserves of electrolyzers, AFGs, and BESs are also incorporated into the power flow constraints to guarantee feasibility during affine adjustments.

\emph{6) Frequency securitys constraints:}
the frequency security of the ReP2H system is enforced by incorporating constraints \eqref{eq:23}, \eqref{eq:24}, \eqref{eq:36}, and \eqref{eq:37} into the scheduling program. For the bilinear terms $x^{\mathrm{st}}R^{\mathrm{PFR}}$ in $\varphi_{1}(x)$ and $\varphi_{2}(x)$ of \eqref{eq:36} and \eqref{eq:37}, the big-M method can be applied to transform them into a mixed-integer linear form~\cite{bemporad1999control}.

\emph{7) Operational Constraints of HPs:}
the operational constraints of HPs in \eqref{eq:67}--\eqref{eq:84} are embedded into the FSPS, thereby enabling the HPs to provide adequate frequency support while maintaining profitability from hydrogen production.

\subsection{Solving method}
\label{sec:DRCC-CO}

Given the stochastic nature of renewable forecast errors and the limited knowledge of their underlying probability distributions, a distributional uncertainty model is adopted to address the DRCCs \eqref{eq:86} and \eqref{eq:87}. We first consider the distributions of WTs and PV output forecast errors $\mathbb{P}_{w}$ and $\mathbb{P}_{r}$ lie within ambiguity sets centered at their empirical distributions. Then, we construct these ambiguity sets in a data-driven manner from historical data and use Wasserstein distance to quantify the distance between the underlying and empirical distributions. Accordingly, $\mathbb{P}_{w}$ and $\mathbb{P}_{r}$ are defined as:
\begin{gather}
	\mathbb{P}_{w} := \{ \mathbb{P}_{w} \in \mathbb{M}_{w} : W_{p}(\mathbb{P}_{w}, \hat{\mathbb{P}}_{w}) \le \theta_w \}, \label{eq:Set-1}\\
	\mathbb{P}_{r} := \{ \mathbb{P}_{r} \in \mathbb{M}_{r} : W_{p}(\mathbb{P}_{r}, \hat{\mathbb{P}}_{r}) \le \theta_r \},
	\label{eq:Set-2}
\end{gather}
where $\hat{\mathbb{P}}_{w}$ and $\hat{\mathbb{P}}_{r}$ are the empirical distributions of WTs and PV output forecast errors. Given historical samples $\{ \hat{\xi}_{w,i}^{'} \}_{i=1}^{N_{w}}$ and $\{ \hat{\boldsymbol{\xi}}_{r,i} \}_{i=1}^{N_{r}}$, the empirical distributions are $\hat{\mathbb{P}}_{w} = \frac{1}{N_{w}} \sum_{i=1}^{N_{w}} \tilde{\xi}_{w,i}^{'}$ and
$\hat{\mathbb{P}}_{r} = \frac{1}{N_{r}} \sum_{i=1}^{N_{r}} \tilde{\boldsymbol{\xi}}_{r,i}$,
where $\tilde{\xi}_{w,i}^{'}$ and $\tilde{\boldsymbol{\xi}}_{r,i}$ are Dirac measures of the samples $\hat{\xi}_{w,i}^{'}$ and $\hat{\boldsymbol{\xi}}_{r,i}$. The Wasserstein metric $W_{p}(\mathbb{P}, \hat{\mathbb{P}})$ quantifies the discrepancy between the empirical and the underlying distributions, while the radius $\theta$ determines the conservativeness of the DR formulation.
Detailed computational procedures of Wasserstein metric can be found in~\cite{duan2018distributionally}.

Following \cite{duan2018distributionally,zymler2013distributionally}, when $\rho_{w} \le \frac{1}{N_{w}}$ and $\rho_{r} \le \frac{1}{N_{r}}$, the DRCCs in \eqref{eq:86} and \eqref{eq:87} can be equivalently reformulated under the ambiguity sets defined in \eqref{eq:Set-1} and \eqref{eq:Set-2} as:
\begin{gather}
	\beta + \frac{v\theta + \tfrac{1}{N} \sum_{i=1}^{N} k_{i}}{\rho} \le 0, \label{eq:89} \\
	\boldsymbol{A}(\boldsymbol{x})\,\hat{\boldsymbol{\zeta}}_{i} - B(x) - \beta \le k_{i}, \\
	\| \boldsymbol{A}(\boldsymbol{x}) \|_{\infty} \le v, \ v \ge 0, \ k_{i} \ge 0.\label{eq:90}
\end{gather}
Here, $\beta$, $k$, and $v$ are auxiliaries.
For constraint~\eqref{eq:86}, we have
$\boldsymbol{A}(\boldsymbol{x}) = -k_{\mathrm{deload}}$,
$\hat{\boldsymbol{\zeta}}_{i} = \hat{\xi}_{w,i}^{'}$,
$B(x) = k_{\mathrm{deload}}\hat{P}_{w,t} - R_{w,t}^{\mathrm{PFR}}$,
$\rho = \rho_{w}$, and $N = N_{w}$.
For constraint~\eqref{eq:87}, we have
$\boldsymbol{A}(\boldsymbol{x}) = \boldsymbol{\alpha}_{g,t} + \boldsymbol{\alpha}_{b,t} + \boldsymbol{\alpha}_{e,t}$,
$\hat{\boldsymbol{\zeta}}_{i} = \hat{\boldsymbol{\xi}}_{r,i}$,
$B(x) = R_{g,t}^s + R_{b,t}^s + R_{e,t}^s$, $s \in \{ +, - \}$,
$\rho = \rho_{r}$, and $N = N_{r}$.
Thus, the FSPS-embedded DRCC-FC scheduling model is transformed into a mixed-integer linear programming (MILP) problem solvable by commercial solvers.

\section{Case studies}
\label{sec:CaseStudy}

Two test systems are used to validate the proposed method. The first is based on a ReP2H demonstration project in Inner Mongolia, China~\cite{Songyuan2023}, as shown in Fig.~\ref{fig:diagram-1}. It includes eight $6.25~\mathrm{MW}$ WTs, a $10~\mathrm{MW}$ PV plant, a $40~\mathrm{MW}$ HP, an $8~\mathrm{MW}/8~\mathrm{MWh}$ BES, and three $12~\mathrm{MW}$ AFGs. The second is a large-scale ReP2H system constructed on a modified IEEE 69-bus network (Fig.~\ref{fig:diagram-2}) to verify scalability and general applicability. It contains twelve $6.25~\mathrm{MW}$ WTs, a $25~\mathrm{MW}$ PV plant, two $40~\mathrm{MW}$ HPs, a $20~\mathrm{MW}/20~\mathrm{MWh}$ BES, and five $6~\mathrm{MW}$ AFGs. Each HP consists of six $5~\mathrm{MW}$ AWEs and eight $1.25~\mathrm{MW}$ PEMELs, with parameters adopted from~\cite{cheng2025power,zeng2024scheduling}.

\begin{figure}[t]
	\centering
	\includegraphics[width=3.95in]{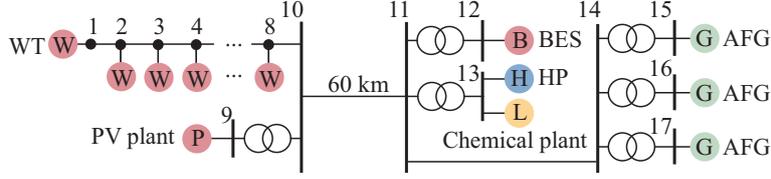}\vspace{-4.5pt}
	\caption{Schematic of the base test ReP2H system.}
	\label{fig:diagram-1}\vspace{0pt}
\end{figure}

\begin{figure}[t]
	\centering
	\includegraphics[width=6.2in]{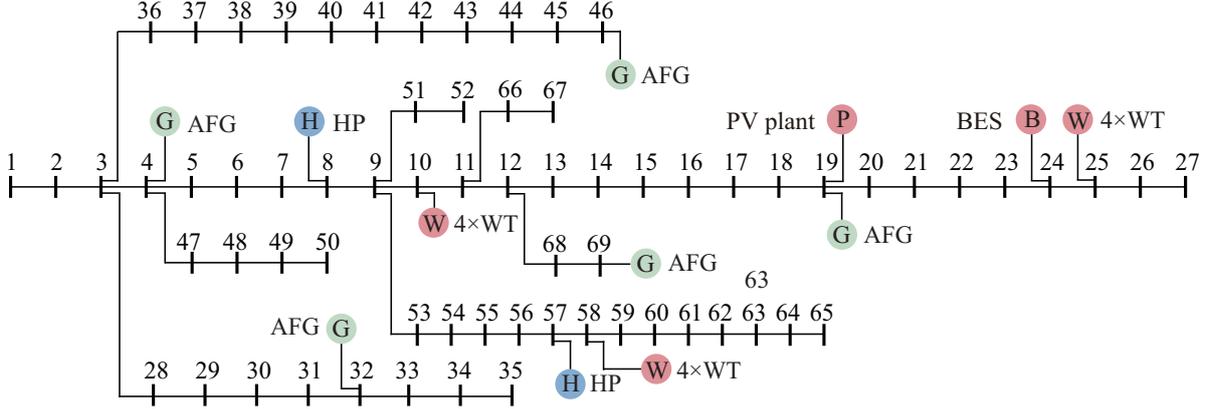}\vspace{-4.5pt}
	\caption{Schematic of the large-scale test ReP2H system based on the modified IEEE 69-bus system.}
	\label{fig:diagram-2}
\end{figure}

A $15\%$ step increase in total load is considered as the contingency that triggers a frequency deviation in the test systems. Frequency limits are set as
$f_N=50,\mathrm{Hz}$,
$\Delta f_\mathrm{nadir}^{\mathrm{lim}}=1.0,\mathrm{Hz}$,
$\Delta f_\mathrm{RoCoF}^{\mathrm{lim}}=0.5,\mathrm{Hz/s}$, and
$\Delta f_\mathrm{qss}^{\mathrm{lim}}=0.5,\mathrm{Hz}$~\cite{IEC62898_2023,farrokhabadi2019microgrid}.
PFR delivery times are ${t_b,t_e,t_w,t_g}={2,3,4,6},\mathrm{s}$.
The violation probability of chance constraints is set as $0.05$.
Forecast uncertainty of renewables is represented by $N_w=N_r=500$ real-world historical samples from~\cite{zhu2026exploring}.
Detailed parameters are listed in Tables~\ref{tab:para-sim} and \ref{tab:para-elec}.

Two comparison methods (CMs) are used as benchmarks to demonstrate the advantages of the proposed method (PM):

\textbf{CM1}: The general hydrogen-based system scheduling method \cite{qiu2023extend} without frequency security constraints.

\textbf{CM2}: The FC scheduling model in \cite{chu2021frequency,chu2024scheduling}, where frequency is regulated by AFGs, WTs, and BESs. HPs do not paricipate in frequency regulation.

\textbf{PM}: The proposed FSPS-embedded FC scheduling method with frequency support from AFGs, WTs, BESs, and HPs.

For a fair comparison, CM1 and the CM2 uses the same production scheduling formulations for the HP as the PM. All scheduling models are implemented in \emph{Wolfram Mathematica 14.0} and solved using \emph{Gurobi 12.0}. The scheduling step is $1\,\mathrm{h}$, and computations are carried out on a workstation with an \emph{Intel Core i9-13900K @ 3.0\,GHz} CPU and \emph{32\,GB RAM}.

\begin{table}[t]
	\centering
	\caption{Parameters of AFGs, BESs, WTs, and unit costs used in the case study.}
	\label{tab:para-sim}
	\vspace{-6pt}
	\footnotesize
	\renewcommand{\arraystretch}{1.05}
	\setlength{\tabcolsep}{6pt} 
	\begin{tabular}{ll|ll}
		\hline \hline
		Parameter & Value & Parameter & Value \\
		\hline
		$\eta_g^{\mathrm{comb}} / \eta_g^{\mathrm{steam}}$  & 0.88 / 0.40           & $\eta_{b}^{\mathrm{C}} / \eta_{b}^{\mathrm{D}}$ & 0.9 / 0.95 \\
		$T_{g}^{\mathrm{on/off}}$                           & 3 / 3 h               & $H_b$ & 1 MW$\cdot$s$^2$ \\
		$\underline{P}_{g} / \overline{P}_{g}$              & 4.5 / 12; 2 / 6 MW    & $\underline{V}_{i} / \overline{V}_{i}$   &  0.95 / 1.05 p.u.   \\
		$R_{g}^{\mathrm{PFR,lim}}$                          & $0.25 \, \overline{P}_{g}$ &  $c_{g}^{\mathrm{NH_{3}}}$  & 5 CNY/kg   \\
		$R_{g}^{+,\mathrm{lim}} / R_{g}^{-,\mathrm{lim}}$   & $0.05 \, \overline{P}_{g}$  & $c_{g}^{\mathrm{up}}$ & 1250 CNY \\
		$H_g$                                               & 3 s                   & $c_{e}^{\mathrm{H_{2}}}$     &32.9 CNY/MWh \\
		$k_{w}^{\mathrm{deload}}$                           & 0.1                   &  $c_{e}^{\mathrm{up/down}}$  &  800 / 0 CNY  \\
		$E_{b}^{\min} / E_{b}^{\max}$                       & 0.1 / 0.9 $\overline{P}_{b}$ &       &           \\
		\hline \hline
	\end{tabular}
\end{table}

\begin{table}[t]
	\centering
	\caption{Parameters of AWEs and PEMELs used in the case study.}
	\label{tab:para-elec}
	\vspace{-6pt}
	\footnotesize
	\renewcommand{\arraystretch}{1.05}
	\setlength{\tabcolsep}{6pt} 
	\begin{tabular}{ll|ll}
		\hline \hline
	 Parameter &  Value (AWE) & Parameter & Value (PEMEL) \\
		\hline
		$C_{e}^{\mathrm{heat}}$ & $7.8 \times 10^{7}$ J/$^\circ$C &
		$C_{e}^{\mathrm{heat}}$ & $2.0 \times 10^{7}$ J/$^\circ$C \\
		$a_{e}^{\mathrm{cool}}$ & 17 kW/$^\circ$C &
		$a_{e}^{\mathrm{cool}}$ & 17 kW/$^\circ$C \\
		$n_{c}/A$ & 313 / 4 m$^{2}$ &
		$n_{c}/A$ & 273 / 1 m$^{2}$ \\
		$\underline{T}/\overline{T}$ & 25 / 80 $^\circ$C &
		$\underline{T}/\overline{T}$ & 25 / 80 $^\circ$C \\
		$T^{\mathrm{cool}}/\eta_{e}^{\mathrm{cool}}$ & 5 $^\circ$C / 4 &
		$T^{\mathrm{cool}}/\eta_{e}^{\mathrm{cool}}$ & 5 $^\circ$C / 4 \\
		$\underline{I}/\overline{I}$ & 2.30 / 7.99 kA &
		$\underline{I}/\overline{I}$ & 0.55 / 2.29 kA \\
		$V_{tn}$ & 1.23 V &  $V_{tn}$ & 1.23 V \\
		\hline \hline
	\end{tabular}
\end{table}


\subsection{Base test system with 14 ELs and 3 AFGs}

\begin{figure}[t]
	\centering
	\includegraphics[width=4in]{FrequencyMetric}\vspace{-4.5pt}
	\caption{Hourly post-contingency frequency metrics of the base test system under the CM1, CM2, and the PM. (a) Frequency nadir. (b) Maximum RoCoF.}
	\label{fig:FrequencyMetric_1}
\end{figure}

\begin{figure}[t]
	\centering
	\includegraphics[width=3.85in]{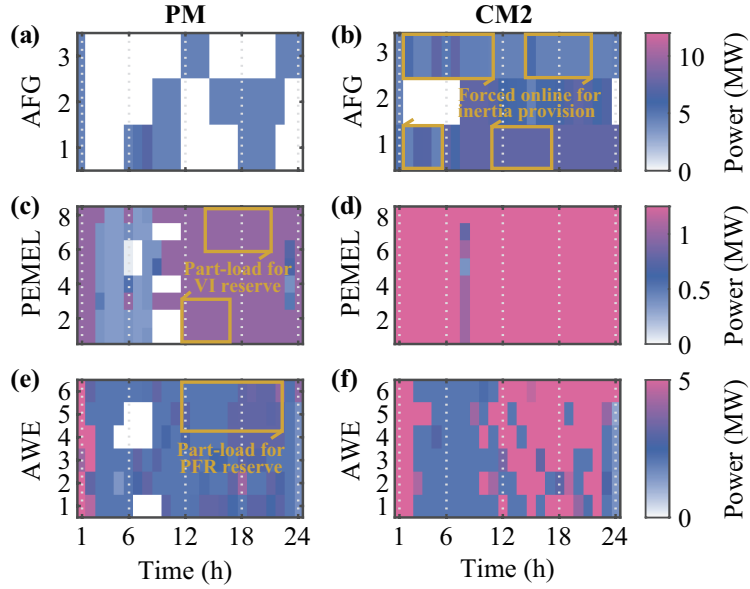}\vspace{-4.5pt}
	\caption{Scheduling results of AFGs, AWEs, and PEMELs of the base test system under PM and CM2.}
	\label{fig:SchedulingResults_1}
\end{figure}

\subsubsection{Necessity of frequency security constraints}

Fig.~\ref{fig:FrequencyMetric_1} shows the hourly frequency nadir and maximum RoCoF metrics under the assumed $15\%$ load disturbance over the scheduling day. Without transient frequency constraints, CM1 fails to secure frequency stability, with nadirs dropping below $45~\mathrm{Hz}$ and as low as $40.44~\mathrm{Hz}$. In contrast, CM2 and PM explicitly enforce nadir, RoCoF, and quasi-steady-state deviation constraints on inertia and PFR reserves, keeping the nadir and RoCoF within the $49~\mathrm{Hz}$ and $0.5~\mathrm{Hz/s}$ limits. These results demonstrate the necessity of embedding the frequency security constraints into scheduling.

\subsubsection{Impact of frequency support from HPs}

\emph{1) Impact on system operation:}
with VI support from PEMELs, HP participation alters the on-off combination of AFGs. During periods when AFGs would otherwise stay online solely to maintain inertia (e.g., periods 2--5 and 12--21 in Fig.~\ref{fig:SchedulingResults_1}), PEMELs supply the required inertia, enabling AFGs 1 and 3 to shut down under the PM as shown in Fig.~\ref{fig:SchedulingResults_1}(a). In contrast, under the CM2, where the HP cannot provide inertia support, AFGs are forced to remain online during these periods as shown in Fig.~\ref{fig:SchedulingResults_1}(b).

\emph{2) Impact on hydrogen production:}
providing PFR and VI requires electrolyzers to maintain load headroom, so both PEMELs and AWEs operate at part load under the PM, as shown in Figs.~\ref{fig:SchedulingResults_1}(c) and \ref{fig:SchedulingResults_1}(e). In contrast, under the CM2, AFGs remaining online for inertia support force electrolyzers to absorb surplus power and operate near full load unnecessarily, resulting in $35.32\%$ excessive hydrogen yield (uneconomically powered by AFGs, however) compared with the PM.

\emph{3) Impact on PFR allocation and economic performance:}
Figs.~\ref{fig:PFRResAndProfit}(a)--(c) show that under PM, AWEs provide $96.85\%$ of AFGs' PFR reserves, taking over most frequency support duties, which is another reason for allowing AFGs 1 and 3 to stay offline. Additionally, $55.52\%$ of WTs' PFR reserve demand is met by AWEs. Since WTs provide PFR through deloading, HP participation allows reducing their deloading rate (e.g., from $10\%$ to $5\%$), increasing wind energy utilization, and potentially increasing hydrogen production and system-level revenues.

Fig.~\ref{fig:PFRResAndProfit}(d) compares system operation costs $C^{\mathrm{UC}}$, net profit $C^{\mathrm{net}}$, and HP profit $C^{\mathrm{PS}}$. Under the PM, leveraging HP support reduces AFG runtime and costs, cutting total operation cost to $405,400$~CNY, compared with $1,160,000$~CNY under the CM2. Although the CM2 achieves higher HP profit, its excessive generation cost leads to a net loss of $424,400$~CNY. In contrast, the PM yields a net profit of $100,800$~CNY, a $123.74\%$ improvement in net profit over the CM2.

The above analysis shows the PM is economically superior. A more detailed discussion of the influence of HP's frequency support on its economic performance and system operation cost are given in Section~\ref{sec:EconomyAnalysis}.

\begin{figure}[t]
	\centering
	\includegraphics[width=4.1in]{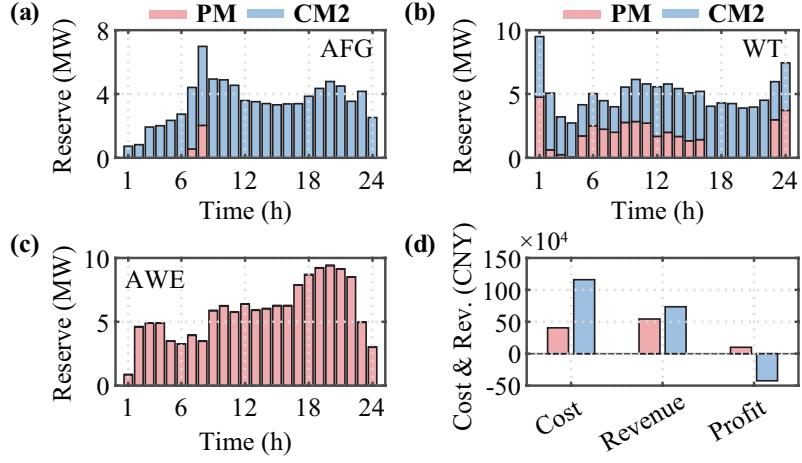}\vspace{-4.5pt}
	\caption{PFR reserves and economic performance of the base test system. (a) Total PFR reserve of AFGs. (b) Total PFR reserve of WTs. (c) Total PFR reserve of AWEs. (d) Operation cost $C^{\mathrm{OP}}$, HP revenue $C^{\mathrm{PS}}$, and net profit $C^{\mathrm{net}}$.}
	\label{fig:PFRResAndProfit}
\end{figure}

\subsubsection{Advantages of embedding the FSPS into FC scheduling:}

\begin{figure}[t]
	\centering
	\includegraphics[width=3.9in]{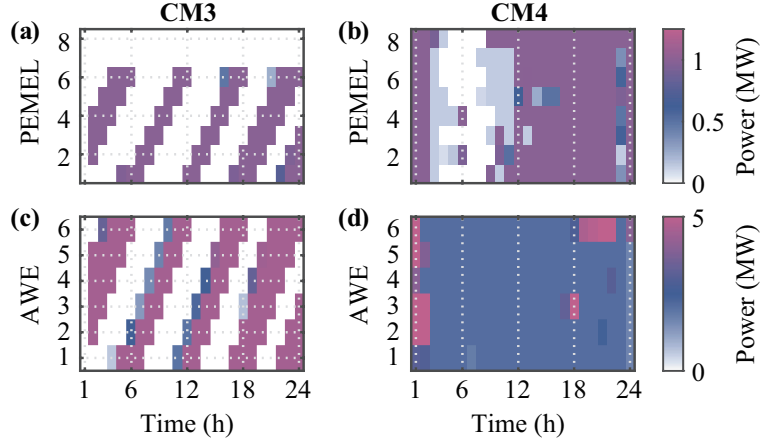}\vspace{-4.5pt}
	\caption{Scheduling results of AWEs and PEMELs of the base system under the CM3 and CM4.}
	\label{fig:OnOffStates_2}\vspace{0pt}
\end{figure}

\begin{figure}[t]
	\centering
	\includegraphics[width=4.in]{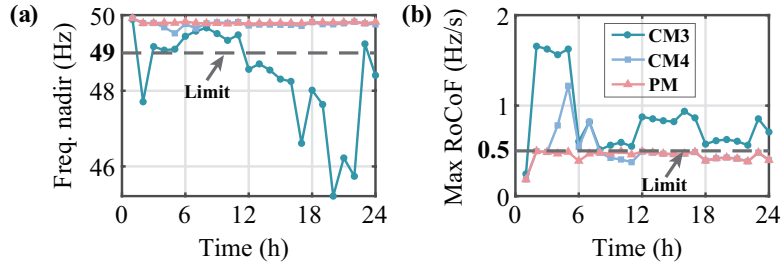}\vspace{-4.5pt}
	\caption{Hourly post-contingency frequency metrics of the base test system under the CM3, CM4, and the PM. (a) Frequency nadir. (b) Maximum RoCoF.}
	\label{fig:FrequencyMetirc_2}\vspace{0pt}
\end{figure}

HP participation in system frequency regulation involves three steps:

\emph{Step~1}: Determining the inertia and PFR required from the HP based on the FC scheduling of the ReP2H system;

\emph{Step~2}: Conduct HP production scheduling;

\emph{Step~3}: Allocating the reserves from \emph{Step~1} among each electrolyzers.

However, existing studies treated FC scheduling of systems and HP production scheduling separately, without exploring the allocation of frequency reserves among multiple electrolyzers. In contrast, the PM could simultaneously optimize system operation, HP scheduling, and the allocation of frequency reserves among AFGs, BESs, WTs, and electrolyzers. To demonstrate its benefits, two more CMs based on state-of-the-art studies are evaluated.

\textbf{CM3}: \emph{Step~1} uses the HP-assisted FC scheduling method in~\cite{wu2022incentivizing,li2024cvar} with the HP modeled as an aggregate electrolyzer; \emph{Step~2} operates the HP by rotational rules~\cite{li2023exploration,zheng2023off}; \emph{Step~3} allocates PFR/VI reserves based on the equimarginal principle~\cite{zeng2024scheduling}.

\textbf{CM4}: Identical to CM3 for \emph{Steps~1 and 3}, but \emph{Step~2} schedule HP uses the optimization-based strategy from \cite{li2024two,zheng2022optimal}.

In both the CM3 and the CM4, the reserve allocation in \emph{Step~3} follows the equimarginal principle to maximize efficiency, ensuring a fair comparison with the PM. The unit combination schedules of AFGs in the CM3 and the CM4 are consistent with those in the PM shown in Fig.~\ref{fig:SchedulingResults_1}(a), and Fig.~\ref{fig:FrequencyMetirc_2} shows the simulated post-disturbance frequency nadir and maximum RoCoF.

In the CM3, the HP is scheduled following the rotational rule to balance stack aging across electrolyzers. 
However, this strategy activates fewer electrolyzers in each period, reducing the inertia and frequency support actually available from the HP. Consequently, the post-disturbance frequency nadir and RoCoF exceed safety limits in multiple periods.

In the CM4, the number of active electrolyzers is similar to the PM, but HP scheduling is driven solely by hydrogen production without considering frequency support needs. Active PEMELs are insufficient in periods~4,~5, and~7, leading to inadequate inertia support and RoCoF violations. The PM keeps all frequency metrics within safety limits across all periods, achieving a hydrogen yield only $2.32\%$ lower than the CM3, balancing HP efficiency with reliable frequency support.

These results clearly show that the VI and PFR support capabilities of the HP are determined by the operating states of individual electrolyzers. Modeling the HP as a single aggregated electrolyzer and scheduling the off-grid ReP2H system and hydrogen production separately results in a mismatch between the frequency support required by the system and the actual capability of the HP, thereby compromising system frequency security. Hence, an FSPS-embedded FC scheduling framework is essential to maximize hydrogen production profitability while ensuring the HP consistently provides the necessary frequency support.

\subsection{Large-scale test system with 28 ELs and 5 AFGs}
\label{sec:IEEE-69}

This section evaluates the scalability of the PM on the larger-scale off-grid ReP2H system shown in~Fig.~\ref{fig:diagram-2} and further analyzes economic benefits from HP's frequency support over year-long scenarios.

\subsubsection{Computational Time and Impact of Renewable Power Uncertainty}
As noted in \cite{duan2018distributionally}, increasing the number of forecast error samples $N_{w/r}$ reduces the Wasserstein radius $\theta$ and hence the conservativeness of the DRCC-FC scheduling, while the reformulation in \eqref{eq:89}--\eqref{eq:90} grows linearly with $N_{w/r}$. Since these constraints involve only continuous variables, the computational complexity scales polynomially, approximately $\mathcal{O}((n+N_{w/r})^3)$ in the linear relaxation. To assess scalability, scenarios with different dataset sizes of $N_{w/r}$ were tested. As shown in Table~\ref{tab:thetaPerformance}, even for $\theta=0.0131$, the average solution time over 10 runs is $1141.43$~s, which is acceptable for practice. Furthermore, a smaller $\theta$ reduces conservativeness and yield higher net profit.

\begin{table}[t]
	\caption{System net profit and computation time with different ambiguity radius $\theta$.}
	\label{tab:thetaPerformance}
	\vspace{-6pt}
	\centering
	\footnotesize
	\renewcommand{\arraystretch}{1.05}
	\setlength{\tabcolsep}{6pt}
	\begin{tabular}{ccc}
		\hline \hline
		Ambiguity radius  $ \theta$
		& Net Profit (CNY)
		& Average CPU Time (s) \\
		\hline
		0.0248 & 481,613 &  644.43 \\
		0.0171 & 492,300 &  735.39 \\
		0.0149 & 499,071 & 1010.50 \\
		0.0131 & 499,607 & 1141.34 \\
		\hline \hline
	\end{tabular}
	\vspace{0pt}
\end{table}

\subsubsection{System operation cost, HP revenue, and net profit}
\label{sec:EconomyAnalysis}

\begin{table}[tb]
	\renewcommand{\arraystretch}{1.05}
	\caption{Comparison of the statistical averages of daily hydrogen yield, ammonia fuel consumption, net hydrogen yield, and frequency security metrics of the large-scale test system over 365 daily scenarios.}
	\label{tab:CM2_PM_Physical}
	\vspace{-6pt}
	\centering
	\footnotesize
	\begin{tabular}{cccccc}
		\hline\hline
		\multirow{2}{*}{Method}
		& Hydrogen yield
		& Ammonia fuel consumption
		& Net hydrogen yield
		& Frequency nadir
		& Max RoCoF \\
		& (t/day)
		& (t/day)
		& (t/day)
		& (Hz)
		& (Hz/s) \\
		\hline
		\textbf{CM2} & 25.58 & 85.56 & 10.52 & 49.72 & 0.397 \\
		\textbf{PM}  & 19.95 & \textbf{25.53} & \textbf{15.45} & 49.71 & 0.433 \\
		\hline\hline
	\end{tabular}

	\begin{tablenotes}[flushleft]
		\footnotesize
		\item {\scriptsize
			* Net hydrogen yield is calculated by converting ammonia fuel consumption into its hydrogen equivalent based on the stoichiometric reaction $\mathrm{N_2} + 3\mathrm{H_2} \rightarrow 2\mathrm{NH_3}$, where $1\,\mathrm{kg}$ NH$_3$ corresponds to approximately $0.176\,\mathrm{kg}$ H$_2$.}
	\end{tablenotes}
\end{table}

\begin{figure}[t]
	\centering
	\includegraphics[width=4.15in]{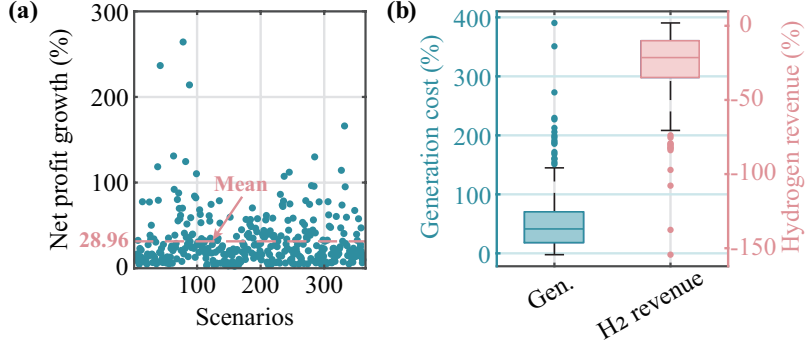}\vspace{-4.5pt}
	\caption{Relative improvement of the system net profit of the large-scale test system across 365 daily scenarios. (a) Relative improvement of net profit. (b) Contribution of generation and hydrogen production revenue to the net profit improvement.}
	\label{fig:improvRate}
\end{figure}

\begin{figure}[t]
	\centering
	\includegraphics[width=4in]{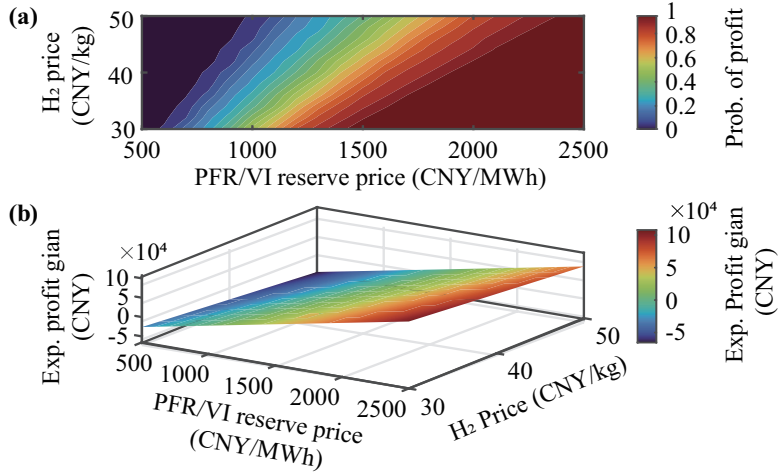}\vspace{-4.5pt}
	\caption{HP profit improvement probability and expected gain of the PM compared with the CM2 under the large-scale test system.}
	\label{fig:RevenueProb}
\end{figure}

Using 365 real daily operation scenarios from~\cite{zhu2026exploring}, Table~\ref{tab:CM2_PM_Physical} reports the statistical averages of daily hydrogen yield, ammonia fuel consumption of AFGs, and net hydrogen yield (considering the consumption for production ammonia fuel that is used to feed the AFGs) of the ReP2H system under the CM2 and PM. From the system perspective, although maintaining headroom for frequency support reduces hydrogen yield in the PM, the substitution of AFGs by HPs for PFR and inertia support lowers ammonia fuel consumption by $70.27\%$ compared with CM2. After converting ammonia savings into hydrogen equivalents, the PM delivers a higher net hydrogen yield and improves the system's net profit by an average of $28.96\%$ over CM2, as shown in Fig.~\ref{fig:improvRate}(a). Fig.~\ref{fig:improvRate}(b) further shows that $54.57\%$ of the profit gain comes from reduced generation costs, while lower hydrogen production offsets $26.08\%$ of the gain.

While the above analysis is performed from the view of a single economic entity at the system level, it is also of practical interest to assess the economic viability of the HP as an individual stakeholder~\cite{zeng2025planning}. In this case, the provision of PFR and VI is assumed to be remunerated, and the corresponding compensation is regarded as frequency regulation revenue for the HP.
Let $C_\mathrm{PM}^{\mathrm{H_2}}$ and $C_\mathrm{HP}^{\mathrm{Freq}}$ denote hydrogen and frequency regulation revenues under the PM, and $C_\mathrm{CM2}^{\mathrm{H_2}}$ denote hydrogen revenue under the CM2. The HP profit in the PM is:
$C_\mathrm{PM}^{\mathrm{HPRev}} = C_\mathrm{PM}^{\mathrm{H_2}} + C^{\mathrm{HPFreq}}.$
When $C_\mathrm{HP}^{\mathrm{Freq}} > C_\mathrm{CM2}^{\mathrm{H_2}} - C_\mathrm{PM}^{\mathrm{H_2}},$
the HP also achieves a net profit gain compared with the CM2.
Since $C_\mathrm{PM}^{\mathrm{H_2}}$, $C_\mathrm{HP}^{\mathrm{Freq}}$, and $C^{\mathrm{HPFreq}}$ depend on market prices, we evaluate profit improvement probability and expected profit gain of HP $(C_\mathrm{HP}^{\mathrm{Freq}} + C_\mathrm{PM}^{\mathrm{H_2}} - C_\mathrm{CM2}^{\mathrm{H_2}})$ across varying PFR/VI reserve and hydrogen prices. As shown in Fig.~\ref{fig:RevenueProb}, when the unit PFR/VI reserve price reaches $971\,\mathrm{CNY/MWh}$, expected profit turns positive, enabling HP arbitrage via frequency regulation. Beyond $1,500\,\mathrm{CNY/MWh}$, the probability of profit exceeds $50\%$, with expected gains ranging from $1,493.3$ to $37,033.1\,\mathrm{CNY}$, depending on the hydrogen price.

\section{Conclusion}
\label{sec:Conclusion}

This paper presents an FC scheduling framework for off-grid ReP2H systems, integrating an FSPS strategy for the HP. The proposed approach coordinates hydrogen production and frequency regulation, enabling AWEs and PEMELs to provide staged and reliable VI and PFR while preserving production profitability. Case studies based on a real-world ReP2H demonstration project and a large-scale system lead to the following conclusions:

1) A clear trade-off exists between frequency support capability and production efficiency in HPs. The VI and PFR capacities depend on the on/off operating states and loading levels of individual electrolyzers, which directly influence hydrogen output and efficiency.

2) Modeling the HP as an aggregated electrolyzer and separating FC scheduling from production scheduling can result in insufficient and unreliable frequency support. In contrast, the proposed integrated framework ensures consistent and adequate support while maintaining near-optimal hydrogen production.

3) Active participation of HPs in frequency regulation is recommended for off-grid ReP2H systems. Although reserving headroom for frequency support slightly reduces hydrogen output, HPs can effectively replace ammonia-fueled generators (AFGs) in providing PFR and inertia support. This substitution reduces AFG operating time and fuel consumption by $70.27\%$ and increases overall system net profit by $28.96\%$ compared with operation without HP-based frequency support, demonstrating the economic value of electrolyzer-based frequency regulation.

Future work will examine the impact of frequency regulation on electrolyzer degradation and determine the optimal level of HP participation in frequency control to balance degradation costs and system benefits.

\section*{Acknowledgement}

The authors gratefully acknowledge the financial support from the National Key Research and Development Program of China (2021YFB4000503) and the National Natural Science Foundation of China (52377116 and 52577129).

\section*{Declaration of Interest}

None.

\section*{Data Availability}

The data related to this work are available upon request.




\end{document}